\numberwithin{equation}{section}
\tikzset{cross/.style={cross out, draw=black, minimum size=2*(#1-\pgflinewidth), inner sep=0pt, outer sep=0pt},
cross/.default={4.5pt}}
\DeclareMathOperator{\cchar}{char }
\DeclareMathOperator{\gal}{Gal}
\renewcommand{\geq}{\geqslant}
\renewcommand{\leq}{\leqslant} 
\renewcommand{\epsilon}{\varepsilon}
\newcommand{\Z}{\mathbb{Z}}
\newcommand{\Q}{\mathbb{Q}}
\newcommand{\F}{\mathbb{F}}
\newcommand{\K}{\mathbb K}
\newcommand{\LL}{\mathbb L}
\newcommand{\M}{\mathbb M}
\newcommand{\Gal}[2]{\gal\left(\faktor{#1}{#2}\right)}
\DeclareFontFamily{U}{mathx}{\hyphenchar\font45}
\DeclareFontShape{U}{mathx}{m}{n}{
      <5> <6> <7> <8> <9> <10>
      <10.95> <12> <14.4> <17.28> <20.74> <24.88>
      mathx10
      }{}
\DeclareSymbolFont{mathx}{U}{mathx}{m}{n}
\DeclareMathAccent{\widecheck}{0}{mathx}{"71}
\DeclareMathAccent{\wideparen}{0}{mathx}{"75}
\newtheorem{teo}{Theorem}[section]
\newtheorem*{teo*}{Theorem}
\newtheorem{lemma}[teo]{Lemma}
\newtheorem{prop}[teo]{Proposition}
\newtheorem*{prop*}{Proposition}
\newtheorem{cor}[teo]{Corollary}
\xpatchcmd{\@thm}{\thm@headpunct{.}}{\thm@headpunct{}}{}{}
\pgfplotsset{compat=1.13}
\begin{document}
\title{An elementary computation of the Galois groups of symmetric sextic trinomials}
\author{\scshape{Alberto Cavallo}\\ \\
 \footnotesize{Max Planck Institute for Mathematics,} \\
 \footnotesize{Bonn 53111, Germany}\\ \\
\small{cavallo@mpim-bonn.mpg.de}}
\date{}

\maketitle             

\begin{abstract}
 We compute the Galois group of the splitting field $\F$ of any irreducible and separable polynomial 
 $f(x)=x^6+ax^3+b$ with $a,b\in\K$, a field with characteristic different 
 from two.
 The proofs require to distinguish between two cases: whether or not the
 cubic roots of unity belong to $\K$.
 
 We also give a criterion to determine whether a polynomial as $f(x)$ is irreducible, when $\F$ is a finite field. Moreover, at the end of the paper we also give a complete list of all the possible subfields of $\F$.
\end{abstract}
\section{Introduction}
The computation of the Galois group of the splitting field of polynomials as $x^4+cx^2+d$ is a standard exercise in Galois theory, see \cite{Cox}. On the other hand, the solution of the similar problem for sextic trinomials $f(x)=x^6+ax^3+b$, although it may be known to experts (\cite{1,2,3,4,5,6,7}), cannot be found easily in literature; specially in the general case when the coefficients of $f(x)$ are taken in a generic field $\K$. This was our main motivation for writing this paper. 

Here, we determine the Galois group of $\F$, the splitting field of $f(x)$, whenever $f(x)\in\K[x]$ is irreducible and separable over $\K$, where $\K$ is every possible field with characteristic different from two. More specifically, we prove the following theorem. Note that when we write that the element $\sqrt[3]b$ is not in the field $\K$ we mean that none of the three cubic roots of $b$ belongs to $\K$. 
\begin{teo}
 \label{teo:main}
 Suppose that $f(x)=x^6+ax^3+b$ is an irreducible and separable polynomial over the field $\K$, where $\cchar(\K)\neq2$; moreover, consider $\Delta=a^2-4b$ and denote with $\F$ the splitting field of $f(x)$. Then we have that 
 \begin{enumerate}
     \item when $\zeta_3\notin\K$, where $\zeta_3$ is a primitive cubic root of unity, and $-3\Delta$        is not a square in $\K$:
     \begin{itemize}
         \item if $\sqrt[3]b\in\K$ or $R(x)=x^3-3bx+ab$ is reducible over $\K$ then $\Gal{\F}{\K}\cong           D_6$, the Dihedral group of order $12$;
         \item if $\sqrt[3]b\notin\K$ and $R(x)$ is irreducible over $\K$ then $\Gal{\F}{\K}\cong\mathcal       S_3\times\mathcal S_3$, the direct product of two symmetric groups over $3$ nodes.
     \end{itemize}
     \item When $\zeta_3\notin\K$ and $-3\Delta$ is a square in $\K$:
     \begin{itemize}
         \item if $\sqrt[3]b\in\K$ and $R(x)$ is irreducible over $\K$ then $\Gal{\F}{\K}\cong C_6$,  
               the cyclic group of order $6$;
         \item if $\sqrt[3]b\notin\K$ and $R(x)$ is reducible over $\K$ then $\Gal{\F}{\K}\cong\mathcal         S_3$;
         \item if $\sqrt[3]b\notin\K$ and $R(x)$ is irreducible over $\K$ then $\Gal{\F}{\K}\cong               C_3\times\mathcal S_3$.
     \end{itemize}
     \item When $\zeta_3\in\K$:
     \begin{itemize}
         \item if $\sqrt[3]b\in\K$ and $R(x)$ is irreducible over $\K$ then $\Gal{\F}{\K}\cong\mathcal          S_3$;
         \item if $\sqrt[3]b\notin\K$ and $R(x)$ is reducible over $\K$ then $\Gal{\F}{\K}\cong C_6$;
         \item if $\sqrt[3]b\notin\K$ and $R(x)$ is irreducible over $\K$ then $\Gal{\F}{\K}\cong               C_3\times\mathcal S_3$.
     \end{itemize}
 \end{enumerate}
 These are all the possible cases. 
\end{teo}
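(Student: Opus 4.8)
The plan is to reduce everything to the quadratic $y^{2}+ay+b$ through the substitution $y=x^{3}$. Writing $y_{\pm}=\tfrac{1}{2}(-a\pm\sqrt{\Delta})$ for its two roots and fixing cube roots $\alpha,\beta$ with $\alpha^{3}=y_{+}$ and $\beta^{3}=y_{-}$, the six roots of $f$ are $\zeta_3^{\,i}\alpha$ and $\zeta_3^{\,j}\beta$, so that $\F=\K(\alpha,\beta,\zeta_3)$ and, since $y_{+}y_{-}=b$, the product $\alpha\beta$ is a cube root of $b$. A direct computation shows that the three elements $m_{k}=\zeta_3^{\,k}\alpha^{2}\beta+\zeta_3^{\,2k}\alpha\beta^{2}$, for $k=0,1,2$, are exactly the roots of $R(x)=x^{3}-3bx+ab$: their elementary symmetric functions recover $0$, $-3b$ and $-ab$, using $\alpha^{2}\beta\cdot\alpha\beta^{2}=b$ and $(\alpha^{2}\beta)^{3}+(\alpha\beta^{2})^{3}=y_{+}y_{-}(y_{+}+y_{-})=-ab$. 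I would record here the numerical fact $\operatorname{disc}(R)=-27b^{2}\Delta$, so that the square class of $\operatorname{disc}(R)$ equals that of $-3\Delta$; this is what ties the behaviour of $R$ to the hypothesis on $-3\Delta$.

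Next I would set up the tower $\K\subseteq\K(\zeta_3)\subseteq E:=\K(\zeta_3,\sqrt{\Delta})\subseteq\F$. Here $[\K(\zeta_3):\K]\le 2$ detects whether $\zeta_3\in\K$, and the identity $-3\Delta=(\sqrt{-3}\,\sqrt{\Delta})^{2}$ shows that $-3\Delta$ is a square in $\K$ exactly when $\K(\sqrt{\Delta})=\K(\zeta_3)$, which fixes $[E:\K]\in\{2,4\}$ in each branch. Since $y_{\pm}\in E\ni\zeta_3$, the top extension $\F=E(\alpha,\beta)$ is Kummer, so $\Gal{\F}{E}$ embeds into $C_3\times C_3$ and is governed by the subgroup $\langle[y_{+}],[y_{-}]\rangle$ of $E^{\times}/(E^{\times})^{3}$. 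The relation $[y_{+}][y_{-}]=[b]$ forces this subgroup to have order $9$ or $3$ (order $1$ is excluded, as it would give $[\F:\K]\le 4$ and contradict the irreducibility of $f$), hence $[\F:E]\in\{9,3\}$. The crux of this step is the dictionary translating the $\K$-level hypotheses into the $E$-level invariant: I would show that $[\F:E]=9$ precisely when $\sqrt[3]{b}\notin\K$ and $R$ is irreducible over $\K$, while each competing condition ($\sqrt[3]{b}\in\K$, or $R$ reducible) produces exactly one independent relation among the $[y_{\pm}]$ and hence $[\F:E]=3$.

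Finally I would assemble $\Gal{\F}{\K}$ from the exact sequence $1\to\Gal{\F}{E}\to\Gal{\F}{\K}\to\Gal{E}{\K}\to 1$ by computing how the quadratic automorphisms act by conjugation on the Kummer generators: the nontrivial automorphism of $\K(\zeta_3)/\K$, sending $\zeta_3\mapsto\zeta_3^{\,2}$, inverts the cube-root automorphisms $\alpha\mapsto\zeta_3\alpha$ and $\beta\mapsto\zeta_3\beta$ simultaneously, while the automorphism $\sqrt{\Delta}\mapsto-\sqrt{\Delta}$ interchanges the roles of $\alpha$ and $\beta$. Reading off the resulting semidirect product then yields the stated groups: an inverting $C_{2}$ turns a $C_{3}$ into $\mathcal S_{3}$, a trivially acting $C_{2}$ splits off as a direct factor, and in the maximal branch $[E:\K]=4$, $[\F:E]=9$ one checks (after a change of basis over $\F_3$) that the two commuting involutions realise the action of $C_{2}\times C_{2}$ on $C_3\times C_3$ defining $\mathcal S_{3}\times\mathcal S_{3}$; the order-$12$ case collapses to $D_{6}\cong\mathcal S_{3}\times C_{2}$, and the order-$6$ cases separate into the abelian $C_{6}$ or the nonabelian $\mathcal S_{3}$ according to whether the relevant involution acts trivially or by inversion.

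The main obstacle I anticipate is not the group theory but the exhaustiveness claim together with this $\K$-versus-$E$ dictionary. I would have to verify that the combinations of hypotheses absent from the list cannot occur when $f$ is irreducible: for instance, when $\zeta_3\notin\K$ and $-3\Delta$ is a square, the simultaneous occurrence of $\sqrt[3]{b}\in\K$ and $R$ reducible is impossible, since there $\operatorname{disc}(R)$ is a square and a reducible cubic with square discriminant splits completely, placing $\alpha^{2}\beta\in E=\K(\zeta_3)$; combined with $\sqrt[3]{b}\in\K$ this would force $y_{+}$ to be a cube in $E$, collapsing $[\F:\K]$ below $6$. Carrying out this descent carefully in every branch, and checking that the square class of $\operatorname{disc}(R)$ behaves correctly under base change from $\K$ to $\K(\zeta_3)$, is where the real work lies.
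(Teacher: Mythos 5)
Your proposal is correct, and it takes a genuinely different route from the paper. The paper never invokes Kummer theory: it bounds $[\F:\K]\in\{6,12,18,36\}$ via subfield diagrams through $\K(\alpha)$ and $\K(\zeta_3,\sqrt[3]b)$, recognizes the group in each order by appealing to the classification of transitive subgroups of $\mathcal S_6$ (the Atlas citation), writes out all automorphisms in explicit tables in the $D_6$ and $\mathcal S_3$ cases, and ties $R(x)$ to the group by showing in one direction that $\alpha\beta(\alpha+\beta)$ is fixed by every automorphism, and in the other by a linear-dependence argument in the basis $\left\{\alpha^i\sqrt[3]{b^j}\right\}$ of $\K(\alpha,\sqrt[3]b)$ that kills degrees $36$ and $18$ when $R$ is reducible; the exhaustiveness issue you flag is exactly its Lemma \ref{lemma:both}. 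You instead linearize over $E=\K(\zeta_3,\sqrt\Delta)$, where $\Gal{\F}{E}$ is the Kummer group of $\langle[y_+],[y_-]\rangle\leq E^\times/(E^\times)^3$, and your dictionary closes more easily than you fear, always by the same trick that $[E:\K]\in\{2,4\}$ is prime to $3$: the only possible nontrivial relations are $[y_+y_-]=[b]=1$ or $[y_+^2y_-]=1$ (a relation $[y_\pm]=1$ alone would put $\alpha$ or $\beta$ in $E$, contradicting $[\K(\alpha):\K]=6$); the first holds iff $b$ is a cube in $E$ iff $\sqrt[3]b\in\K$, since $3\nmid[E:\K]$; the second places $\zeta_3^k\alpha^2\beta$, hence a root $m_k$ of $R$, in $E$, forcing $R$ reducible over $\K$ for the same divisibility reason; and conversely a root $c\in\K$ of $R$ makes $\zeta_3^k\alpha^2\beta$ quadratic over $\K$, so that $E(\zeta_3^k\alpha^2\beta)$ has degree at most $2[E:\K]$ over $\K$, incompatible with an order-$9$ Kummer group in which this element has degree $3$ over $E$ --- this replaces the paper's linear-combination argument. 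Your description of the actions is also right, provided you compound the two effects on $\mathrm{Hom}(\cdot,\mu_3)$ --- inversion of $\mu_3$ by the $\zeta_3$-flip and the swap of $[y_+],[y_-]$ by the $\sqrt\Delta$-flip --- in the branch where $\K(\sqrt\Delta)=\K(\zeta_3)$, which is precisely why $C_6$ and $\mathcal S_3$ trade places between cases 2 and 3; your $\F_3$ change of basis does yield $\mathcal S_3\times\mathcal S_3$, and the splitting of the extension $1\to\Gal{\F}{E}\to\Gal{\F}{\K}\to\Gal{E}{\K}\to1$ that your semidirect-product reading assumes is automatic by Schur--Zassenhaus, the orders $|\Gal{\F}{E}|$ and $[E:\K]$ being coprime. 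Two small additions are needed: note $b\neq0$ and $\Delta\neq0$ (immediate from irreducibility), so $R$ is separable and the discriminant computations are nondegenerate; and note that separability forces $\cchar(\K)\neq3$ (in characteristic $3$ one has $f'\equiv0$), which is what licenses the existence of $\zeta_3$ and the use of Kummer theory throughout --- the paper treats characteristic $3$ explicitly and shows $\F$ is then inseparable. On balance, the paper's route is elementary and its explicit automorphism tables are reused in Section \ref{section:subfields} for the subfield lists, while yours is shorter and more conceptual, avoids the transitive-subgroup classification altogether, and makes transparent why the square class of $-3\Delta$, which is that of $\text{disc}(R)=-27b^2\Delta$, is the governing invariant.
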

Such computations agree with the ones of Harrington and Jones in \cite{H-J} in the case $\K=\Q$ and $b$ is a cube in $\Q$.

We present an infinite family of polynomials whose Galois group is $G$ for every $G$ appearing in Theorem \ref{teo:main}. 
Using Galois correspondence, we are also able to explicitly determine all the possible extensions of $\K$ which are subfields of $\F$. Furthermore, we apply the results in Theorem \ref{teo:main} to finite fields;
we obtain necessary and sufficient conditions for a polynomial $f(x)$ of the given form to be irreducible.
\begin{teo}
 \label{teo:finite}
 Suppose that $f(x)=x^6+ax^3+b\in\F_{p^k}[x]$, where $p\neq2$. Then we have that
 \begin{enumerate} 
     \item when $p^k\not\equiv1\emph{ mod }3$ and $p\neq3$ the       polynomial $f(x)$ is irreducible if and only if 
           $R(x)=x^3-3bx+ab$ is irreducible over $\F_{p^k}$;
     \item when $p^k\equiv1\emph{ mod }3$ the polynomial 
           $f(x)$ is irreducible if and only if $\sqrt[3]b\:,\sqrt\Delta\notin\F_{p^k}$;
     \item when $p=3$ the polynomial $f(x)$ is never irreducible.
 \end{enumerate}
\end{teo}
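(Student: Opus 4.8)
The plan is to reduce everything to the standard description of irreducibility over a finite field $\F_{p^k}$: a separable polynomial of degree $n$ is irreducible over $\F_{p^k}$ if and only if one (equivalently, each) of its roots generates $\F_{p^{nk}}$, i.e. if and only if the Frobenius $\phi:x\mapsto x^{p^k}$ permutes the roots in a single $n$-cycle. I will therefore determine, in each arithmetic regime, the degree over $\F_{p^k}$ of a root of $f$, working through the tower coming from the substitution $y=x^3$: the roots of $f$ are the cube roots of the two roots $y_{1,2}=\frac{-a\pm\sqrt\Delta}{2}$ of the resolvent quadratic $y^2+ay+b$, so every root of $f$ lies in $\F_{p^k}(\sqrt\Delta,u)$ with $u^3=y_1$, and $y_1y_2=b$, $y_1+y_2=-a$. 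The case $p=3$ is disposed of first and is immediate: cubing is surjective on $\F_{p^k}$, so one may write $a=\alpha^3$, $b=\beta^3$, and then the Frobenius identity gives $f(x)=(x^2+\alpha x+\beta)^3$, whence $f$ is never irreducible. This is item $(3)$.

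Assume next $p^k\equiv1\ \mathrm{mod}\ 3$, so that $\zeta_3\in\F_{p^k}$ and the cubes form an index‑$3$ subgroup of $\F_{p^k}^*$. If $\sqrt\Delta\in\F_{p^k}$ then $y_1\in\F_{p^k}$ and $u$ is a root of $x^3-y_1\in\F_{p^k}[x]$, of degree $3$, so no root of $f$ has degree $6$ and $f$ is reducible; thus $\sqrt\Delta\notin\F_{p^k}$ is necessary. Granting it, $\F_{p^k}(\sqrt\Delta)=\F_{p^{2k}}$ contains $\zeta_3$, so $x^3-y_1$ is irreducible over $\F_{p^{2k}}$ exactly when $y_1$ is not a cube there, and in that case $u$ has degree $6$ over $\F_{p^k}$ and $f$ is irreducible (otherwise $u\in\F_{p^{2k}}$ and $f$ splits into quadratics). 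The remaining point is to convert ``$y_1$ is not a cube in $\F_{p^{2k}}$'' into the stated condition $\sqrt[3]b\notin\F_{p^k}$: the norm $N=N_{\F_{p^{2k}}/\F_{p^k}}$ is surjective with kernel of order $p^k+1$ (coprime to $3$, as $p^k\equiv1$), so the composite $\psi\circ N$ of the cubic residue character $\psi$ of $\F_{p^k}$ with $N$ is a nontrivial cubic character of $\F_{p^{2k}}^*$ and hence cuts out precisely the cubes; therefore $y_1$ is a cube in $\F_{p^{2k}}$ iff $b=N(y_1)=y_1y_2$ is a cube in $\F_{p^k}$, i.e. iff $\sqrt[3]b\in\F_{p^k}$. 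This yields item $(2)$.

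When $p\neq3$ and $p^k\not\equiv1\ \mathrm{mod}\ 3$ we have $p^k\equiv2\ \mathrm{mod}\ 3$, so $\zeta_3\notin\F_{p^k}$ while $\zeta_3\in\F_{p^{2k}}$, and cubing is a bijection of $\F_{p^k}$. Here I will prove $f$ irreducible $\iff R$ irreducible by comparing the two polynomials through the resolvent cubic: the same symmetric‑function computation underlying Theorem \ref{teo:main} shows that $\rho_k=uv(\zeta_3^{\,k}u+\zeta_3^{-k}v)$, $k=0,1,2$, are the roots of $R(x)=x^3-3bx+ab$, and they lie in the splitting field $\F$. If $\sqrt\Delta\in\F_{p^k}$, then $y_1,y_2\in\F_{p^k}$ have unique cube roots $u,v\in\F_{p^k}$, so $f$ has the root $u$ and $R$ the root $\rho_0=uv(u+v)$ in $\F_{p^k}$: both reducible. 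If $\sqrt\Delta\notin\F_{p^k}$, then $y_2=y_1^{p^k}$, so $y_1$ is a cube in $\F_{p^{2k}}$ iff $y_2$ is; when it is, $u,v,\zeta_3\in\F_{p^{2k}}$ force all $\rho_k\in\F_{p^{2k}}$ and $u$ of degree $2$, so again $f$ and $R$ are both reducible; when $y_1$ is not a cube, $u$ has degree $6$ and $v=u^{p^k}\in\F_{p^k}(u)$, so $\F=\F_{p^{6k}}$ and $f$ is irreducible. In this last situation I will compute $\phi(u)=v$ and $\phi(v)=\zeta_3^{\,c}u$ with $c\in\{1,2\}$ (the value $c=0$ excluded by $\deg u=6$), and substitute into $\rho_k$ to obtain $\phi(\rho_k)=\rho_{2(c-k)\bmod 3}$, a genuine $3$‑cycle; hence $R$ is irreducible too. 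Collecting the four possibilities gives item $(1)$.

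The easy parts are $p=3$ and the necessity of $\sqrt\Delta\notin\F_{p^k}$; the two places demanding care are the norm/character identification when $p^k\equiv1\ \mathrm{mod}\ 3$ and, more seriously, the Frobenius cycle computation when $p^k\equiv2\ \mathrm{mod}\ 3$. The latter is the main obstacle: one must pin down that the twist $c$ is nonzero and that the induced permutation of $\{\rho_0,\rho_1,\rho_2\}$ is a $3$‑cycle rather than a transposition or the identity, since it is exactly this that transfers irreducibility from $f$ to $R$. I expect the bookkeeping of the cube‑root choices—verifying that the unordered set $\{\rho_k\}$, and hence $R$, does not depend on which cube roots $u,v$ are selected—to be the fiddly step, but it is precisely what makes $R$ the correct object linking the two irreducibility questions.
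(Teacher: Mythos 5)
Your proof is correct, but it takes a genuinely different route from the paper's. The paper obtains Theorem \ref{teo:finite} as a corollary of its Galois-group classification: Galois groups over finite fields are cyclic, so $f(x)$ irreducible forces $\Gal{\F}{\F_{p^k}}\cong C_6$, and Propositions \ref{prop:C_6} and \ref{prop:zeta_in} together with Lemma \ref{lemma:both} translate this into the stated conditions on $\sqrt[3]b$ and $R(x)$; for the converses, the paper shows the hypotheses force $[\F:\F_{p^k}]=6$ (in Case 1 because $\mathrm{disc}(R)=-27b^2\Delta$ must be a square while $-3$ is not, so $\Delta$ is a nonsquare) and then rules out the splitting types of a reducible $f(x)$, which would give $[\F:\F_{p^k}]\in\{2,3\}$. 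You bypass the classification entirely and argue directly with the Frobenius on the tower coming from $y=x^3$, using nothing from Section \ref{section:group}. Your two delicate steps do check out. First, the norm/character identification in Case 2 is sound: $N=N_{\F_{p^{2k}}/\F_{p^k}}$ is surjective, so $\psi\circ N$ is a surjective homomorphism onto the cube roots of unity whose kernel must be the unique index-$3$ subgroup of the cyclic group $\F_{p^{2k}}^{*}$, namely the cubes; and since $y_2=y_1^{p^k}$ once $\sqrt\Delta\notin\F_{p^k}$, one has $N(y_1)=y_1y_2=b$, so $y_1$ is a cube in $\F_{p^{2k}}$ iff $\sqrt[3]b\in\F_{p^k}$. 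Second, the Frobenius computation in Case 1 works: with $\phi(u)=v$, $\phi(v)=\zeta_3^{c}u$ where $c\neq0$ because $u\notin\F_{p^{2k}}$, and $\phi(\zeta_3)=\zeta_3^{2}$, one gets $\phi(\rho_k)=\rho_{2(c-k)}=\rho_{k+2c}$ (as $-2\equiv1\bmod 3$), a translation by $2c\not\equiv0$, hence a $3$-cycle on indices; the coincidence worry you flag disappears because in that subcase $\Delta$ is a nonsquare (so nonzero) and $y_1$ is not a cube (so $b=y_1y_2\neq0$), whence $\mathrm{disc}(R)=-27b^2\Delta\neq0$ — alternatively, a nontrivial translation identifying two of the $\rho_k$ identifies all three, forcing $R(x)=x^3$ and $b=0$. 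Likewise the choice-of-cube-root bookkeeping is harmless: replacing $u$ by $u\zeta_3^{i}$ replaces $v=u^{p^k}$ by $v\zeta_3^{-i}$, leaves $uv$ fixed, and merely shifts the index $k$, so the set $\{\rho_0,\rho_1,\rho_2\}$ and hence $R(x)$ are well defined. As for what each approach buys: the paper's proof is very short given the machinery of Theorem \ref{teo:main} and explains conceptually why $C_6$ is the only possible group; yours is self-contained and more elementary, yields the explicit Frobenius action on the roots of both $f(x)$ and $R(x)$, and in Case 2 isolates the clean intermediate criterion that $f(x)$ is irreducible iff $y_1$ is a noncube in $\F_{p^{2k}}$ with $\sqrt\Delta\notin\F_{p^k}$, which the paper never makes explicit.
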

We apply this result to prove that some polynomials are irreducible over the rational numbers, see Corollaries \ref{cor:1} and \ref{cor:2}.

The paper is organized as follows. In Section \ref{section:preliminaries} we set the notation and we prove some preliminary results. Moreover, we determine what are the possible degrees $[\F:\K]$ for the splitting field of $f(x)$ over $\K$.
In Section \ref{section:group} we prove Theorem \ref{teo:main} and finally, in Section \ref{section:subfields}, we list all the intermediate extensions between $\F$ and $\K$.
\paragraph*{Acknowledgements} The author has a post-doctoral fellowship at the Max Planck Institute for Mathematics in Bonn.

\section{Preliminaries}
\label{section:preliminaries}
Let us consider our polynomial $f(x)=x^6+ax^3+b$ over the field $\K$. We suppose that $f(x)$ is irreducible, and also separable if $\cchar(\K)=3$, and the characteristic of $\K$ is not two. We also take $\Delta=a^2-4b$.

Since one has \[x^3=\dfrac{-a\pm\sqrt\Delta}{2}\:,\]
the six roots of $f(x)$ in the algebraic closure of $\K$, when $\cchar(\K)$ is different from three, are $\left\{\alpha,\alpha\zeta_3,\alpha\zeta_3^2,\beta,\beta\zeta_3,\beta\zeta_3^2\right\}$, 
where $\zeta_3$ is a primitive cubic root of unity; moreover, we have that
\[\alpha=\sqrt[3]{\dfrac{-a+\sqrt\Delta}{2}}\:\:\:\:\:\text{ and }\:\:\:\:\:\beta=\sqrt[3]{\dfrac{-a-\sqrt\Delta}{2}}\:.\]
Otherwise, in the case of $\cchar(\K)=3$ the roots are just $\{\alpha,\beta\}$,
each one with multiplicity three. This is because in this kind of fields $\zeta_3$ does not exist and the only cubic root of unity is $1$.

We observe that $\alpha\beta=\sqrt[3]b$. Since once we fix $\alpha$ there are 
three choices for $\beta$, we have that $\sqrt[3]b$ also depends on this selection. Although, when $\K=\Q$, it may seems more natural to take $\alpha$ and $\beta$ in the way that $\sqrt[3]b$ is always real, in this paper we need to not make this assumption.

Furthermore, after an easy computation we note that
\[\alpha^3+\beta^3=-a\:\:\:\:\:\text{ and }\:\:\:\:\:\alpha^3\beta^3=b\:.\]
\begin{lemma}
 \label{lemma:triplets}
 We have that $\alpha\neq\beta$, where here we mean that none of the three values of $\beta$ coincides with $\alpha$.
\end{lemma}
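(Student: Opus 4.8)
The plan is to argue by contradiction, reducing the coincidence $\alpha=\beta\zeta_3^{\,j}$ (for any $j\in\{0,1,2\}$) to the vanishing of the discriminant $\Delta$, which would force $f(x)$ to be a perfect square and hence contradict its irreducibility.

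First I would suppose that one of the three values of $\beta$ coincides with $\alpha$, i.e. $\alpha=\beta\zeta_3^{\,j}$ for some $j$. Cubing both sides and using $\zeta_3^3=1$, a single computation handles all three candidates at once and gives $\alpha^3=\beta^3$. Since by construction $\alpha^3=\frac{-a+\sqrt\Delta}{2}$ and $\beta^3=\frac{-a-\sqrt\Delta}{2}$, subtracting yields $\alpha^3-\beta^3=\sqrt\Delta$, so the relation $\alpha^3=\beta^3$ forces $\sqrt\Delta=0$, that is $\Delta=0$. (Here the hypothesis $\cchar(\K)\neq2$ is what lets me divide by $2$ and conclude $\sqrt\Delta=0$ rather than $2\sqrt\Delta=0$.)

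Next I would translate $\Delta=a^2-4b=0$ into an explicit factorization. With $b=a^2/4$ one gets $f(x)=x^6+ax^3+a^2/4=(x^3+a/2)^2$, a genuine nontrivial factorization over $\K$ (again using $\cchar(\K)\neq2$ to make sense of $a/2$). This contradicts the assumption that $f(x)$ is irreducible, which completes the argument.

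The only point that needs a separate word is the characteristic three case, where $\zeta_3$ does not exist and the ``three values of $\beta$'' collapse to one. However, $f'(x)=6x^5+3ax^2$ vanishes identically when $\cchar(\K)=3$, so no polynomial of this form can be separable there; hence that case is excluded by our standing hypotheses and the statement holds vacuously. I do not expect a serious obstacle in this proof: the whole content is the one-line passage $\alpha=\beta\zeta_3^{\,j}\Rightarrow\alpha^3=\beta^3\Rightarrow\Delta=0$, and the only thing to be careful about is the bookkeeping that the factor $\zeta_3^3=1$ makes this single computation rule out all of $\beta,\beta\zeta_3,\beta\zeta_3^2$ simultaneously.
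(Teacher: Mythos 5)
Your argument for $\cchar(\K)\neq3$ is correct, and it takes a genuinely different and somewhat cleaner route than the paper's. The paper computes the discriminant, $\text{disc}(f(x))=729b^2\Delta^3$, observes that a repeated root forces it to vanish, and then has to rule out both $\Delta=0$ and $b=0$ as each making $f(x)$ reducible. Your cubing of the hypothetical relation $\alpha=\beta\zeta_3^{\,j}$ goes straight to $\alpha^3=\beta^3$, hence $\sqrt\Delta=0$ and $\Delta=0$, and the explicit factorization $f(x)=(x^3+a/2)^2$ finishes the job without ever invoking $\text{disc}(f(x))$ or touching the case $b=0$. What the paper's route buys is the observation that the discriminant of the sextic is controlled by $b$ and $\Delta$ alone, which is mildly illuminating; what yours buys is that it handles all three values of $\beta$ in one stroke and stays entirely elementary.

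Your treatment of $\cchar(\K)=3$, however, is a genuine gap. You are right that $f'(x)=6x^5+3ax^2$ vanishes identically there, so $f(x)$ can never be separable in characteristic three; but the paper does not discard this case as vacuous. The paragraph immediately preceding the lemma describes the characteristic-three root set $\{\alpha,\beta\}$, ``each one with multiplicity three'' --- an assertion whose content is precisely this lemma --- and later the paper proves a proposition about irreducible, necessarily inseparable $f(x)$ in characteristic three and exhibits $x^6-u\in\F_3(u)[x]$ as a concrete example, so the case is substantive, not empty. Accordingly the paper proves the lemma there directly: if $\alpha=\beta$, then $\alpha^3+\beta^3=-a$ gives $2\alpha^3=-a$, i.e.\ $\alpha^3=a$ since $2=-1$, and $\alpha^3\beta^3=b$ gives $b=a^2$, whence $f(x)=(x^3-a)^2$, contradicting irreducibility. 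The fix to your write-up costs one sentence, because your own computation survives unchanged: in characteristic three $\zeta_3=1$, the relation $\alpha=\beta$ still cubes to $\sqrt\Delta=0$, so $b=a^2/4=a^2$ and $f(x)=(x^3+a/2)^2=(x^3-a)^2$. But as written, declaring the case ``excluded by our standing hypotheses'' misreads the setup under which the lemma is stated and used.
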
 
\begin{proof}
 Suppose for now that $\cchar(\K)\neq3$ and assume that $\alpha=\beta$. Since there is a double root one has \[\text{disc}(f(x))=729b^2\Delta^3=0\] and this implies that $\Delta$ or $b$ is also zero. In both cases $f(x)$ would be reducible.
 
 Now we consider a field $\K$ with characteristic three. In this case $\text{disc}(f(x))$ is always zero, but from the equations we wrote before we obtain
 \[(2\alpha)^3=(\alpha+\alpha)^3=2\alpha^3=2a\:,\] which means that
 \[\alpha^3=a\:\:\:\:\:\text{ and }\:\:\:\:\:\alpha^6=b=a^2\:.\]
 Hence, in conclusion $f(x)=x^6-2ax^3+a^2=(x^3-a)^2$ and this is a contradiction because $f(x)$ is irreducible.
\end{proof}
The splitting field of $f(x)$ is clearly $\F=\K(\zeta_3,\alpha,\beta)=\K(\zeta_3,\alpha,\sqrt[3]b)$. Denote with $\LL$ the subfield $\K(\zeta_3,\sqrt[3]b)\cap\K(\alpha)$;
we obtain the diagram of extensions in Figure \ref{Splitting}.
In the diagrams on the right we have that $[\K(\sqrt[3]b):\K]=1,3$ because, up to the choice of $\alpha$ and $\beta$, we can assume either $\sqrt[3]b\in\K$ or none of the three possible cubic roots of $b$ belongs to $\K$.
We can detect whether $\zeta_3$ belongs to $\K(\alpha)$ by using the following criterion.
\begin{lemma}
 \label{lemma:zeta}
 Suppose that $\K$ is a field with $\cchar(\K)\neq2,3$ and $\zeta_3\notin\K$. We have $\K(\zeta_3)\subset\K(\alpha)$ if and only if $\Delta=-3n^2$, where $n\in\K$.
\end{lemma}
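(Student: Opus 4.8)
The plan is to reduce the containment $\K(\zeta_3)\subseteq\K(\alpha)$ to a comparison of quadratic subfields. First I would record that $\K(\zeta_3)=\K(\sqrt{-3})$: since $\cchar(\K)\neq2$ we have $\zeta_3=\tfrac{-1+\sqrt{-3}}{2}$, so $\zeta_3$ and $\sqrt{-3}$ generate the same extension, and because $\zeta_3\notin\K$ (equivalently $\sqrt{-3}\notin\K$, using $\cchar(\K)\neq3$ so that $-3\neq0$) this is a genuine degree-$2$ extension. Next I would observe that $\sqrt\Delta=2\alpha^3+a\in\K(\alpha)$, so that $\K(\sqrt\Delta)\subseteq\K(\alpha)$ unconditionally, and that $\Delta$ is not a square in $\K$: otherwise $\alpha^3\in\K$ would force $[\K(\alpha):\K]\leq3$, contradicting that $f(x)$ is irreducible of degree $6$, whence $[\K(\alpha):\K]=6$. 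Thus $\K(\sqrt\Delta)$ is always a quadratic subfield of $\K(\alpha)$, and the question becomes whether the other quadratic field $\K(\sqrt{-3})$ also sits inside $\K(\alpha)$.

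I will use throughout the standard criterion (valid since $\cchar(\K)\neq2$): for $d_1,d_2\in\K^\ast$ neither a square, $\K(\sqrt{d_1})=\K(\sqrt{d_2})$ if and only if $d_1d_2\in(\K^\ast)^2$. For the direction $(\Leftarrow)$, assume $\Delta=-3n^2$ with $n\in\K$. Then $-3\Delta=(3n)^2$ is a square, so the criterion gives $\K(\sqrt\Delta)=\K(\sqrt{-3})=\K(\zeta_3)$. Since $\K(\sqrt\Delta)\subseteq\K(\alpha)$, this yields $\zeta_3\in\K(\alpha)$, i.e. $\K(\zeta_3)\subset\K(\alpha)$.

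For the direction $(\Rightarrow)$, assume $\zeta_3\in\K(\alpha)$. Then both $\K(\sqrt{-3})$ and $\K(\sqrt\Delta)$ are quadratic subfields of $\K(\alpha)$. If they were distinct, their compositum $\K(\sqrt{-3},\sqrt\Delta)$ would be a subfield of $\K(\alpha)$ of degree $4$ over $\K$; but $[\K(\alpha):\K]=6$ and $4\nmid6$, contradicting the tower law. Hence $\K(\sqrt{-3})=\K(\sqrt\Delta)$, and the criterion forces $-3\Delta$ to be a square in $\K$. Writing $-3\Delta=c^2$ and setting $n=c/3\in\K$ gives $\Delta=-3n^2$, as required.

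The only genuinely nontrivial step is the degree obstruction $4\nmid6$ that rules out two distinct quadratic subfields inside $\K(\alpha)$; this is where the irreducibility of $f(x)$ (through $[\K(\alpha):\K]=6$) does the real work. Everything else is bookkeeping: the identity $\K(\zeta_3)=\K(\sqrt{-3})$, the membership $\sqrt\Delta\in\K(\alpha)$, and the quadratic-subfield criterion. I expect no hidden difficulties, provided one is careful to verify at the outset that both $-3$ and $\Delta$ are non-squares in $\K$, so that the two quadratic fields are honestly two-dimensional.
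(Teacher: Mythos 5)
Your proof is correct and takes essentially the same route as the paper's: the backward implication via $\sqrt\Delta=2\alpha^3+a\in\K(\alpha)$ together with $\K(\zeta_3)=\K(\sqrt{-3})$, and the forward implication via the observation that two distinct quadratic subfields of $\K(\alpha)$ would produce a degree-$4$ subextension, impossible since $4\nmid 6$. You merely make explicit some bookkeeping the paper leaves implicit, namely the non-squareness of $\Delta$ and the criterion $\K(\sqrt{d_1})=\K(\sqrt{d_2})\iff d_1d_2\in(\K^\ast)^2$.
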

\begin{figure}[H]
  \begin{center}
   \begin{tikzpicture}[node distance=2cm, auto]
               \node (A) {$\F$};
		       \node (B) [below of=A,xshift=2cm] {$\K(\alpha)$};
               \node (C) [below of=A,xshift=-2cm] {$\K(\zeta_3,\sqrt[3]b)$};
               \node (D) [below of=C,xshift=2cm]          {$\LL$};
               \node (E) [below of=D] {$\K$};
               \draw[-] (C) to node  {} (A);
               \draw[-] (B) to node  {} (A);
               \draw[-] (D) to node  [swap]{$2\text{ or }6$} (C);
               \draw[-] (D) to node  {} (B);
               \draw[-] (E) to node  {} (D);
               \draw[-] (E) to node  {$2\text{ or }6$} (C);
               \draw[-] (E) to node  [swap]{$6$} (B);
               
               \node (A1) [right of=B,xshift=0.8cm]{$\K(\alpha)$};
		       \node (B1) [below of=A1,xshift=1.5cm]   
		                 {$\K(\sqrt[3]b)$};
               \node (C1) [below of=A1,xshift=-1.5cm] {$\K(\sqrt\Delta)$};
               \node (D1) [below of=C1,xshift=1.5cm]{$\K$};
               \draw[-] (C1) to node  {$3$} (A1);
               \draw[-] (B1) to node  [swap]{$6,2$} (A1);
               \draw[-] (D1) to node  {$2$} (C1);
               \draw[-] (D1) to node  [swap]{$1,3$} (B1);
               
               \node (F) [right of=A,xshift=8cm] {$\F$};
		       \node (G) [below of=F,xshift=2cm] {$\K(\alpha)$};
               \node (H) [below of=F,xshift=-2cm] {$\K(\zeta_3,\sqrt[3]b)$};
               \node (I) [below of=H,xshift=2cm]          {$\K(\zeta_3)$};
               \node (L) [below of=I] {$\K$};
               \draw[-] (H) to node  {} (F);
               \draw[-] (G) to node  {} (F);
               \draw[-] (I) to node  {$3$} (H);
               \draw[-] (I) to node  {$6,3$} (G);
               \draw[-] (L) to node  {$1,2$} (I);
               \draw[-] (L) to node  [swap]{$6$} (G);
  \end{tikzpicture}
 \end{center}
 \caption{In the diagram on the left $\K(\zeta_3)\not\subset\K(\alpha)$, in the middle one $\K(\zeta_3),\:\K(\sqrt[3]b)\subset\K(\alpha)$, while in the one on the right $\K(\zeta_3)\subset\K(\alpha)$ and $\K(\sqrt[3]b)\not\subset\K(\alpha)$.}
 \label{Splitting}
\end{figure}
\begin{proof}
 The if implication follows by observing that \[2\alpha^3+a=\sqrt\Delta=\sqrt{-3n^2}=ni\sqrt3\] and then $\K(\zeta_3)=\K(i\sqrt3)$ is contained in $\K(\alpha)$.
 
 For the only if implication suppose that $\Delta\neq-3n^2$. Then $\K(\sqrt\Delta,i\sqrt3)\subset\K(\alpha)$ should be a degree 4 extension of $\K$, but
 obviously 4 is not a divisor of 6.
\end{proof}
When $\K(\zeta_3)\subset\K(\alpha)$ the diagram on the right in Figure \ref{Splitting} tells us that $[\F:\K]$ is $6$ or $18$. Since the only transitive subgroup of $\mathcal S_6$ of order $18$ is $C_3\times\mathcal S_3$
\cite{Transitive},
we obtain that $\Gal{\F}{\K}$ can be isomorphic to $\mathcal S_3,C_6$ or $C_3\times\mathcal S_3$; we recall that $C_n$
denotes the cyclic group of order $n$.

On the other hand, if $\zeta_3$ is not in $\K(\alpha)$ then, since $\K(\zeta_3)$ is the only quadratic extension in $\K(\zeta_3,\sqrt[3]b)$, we have that
$[\LL:\K]$ is equal to $1$ or $3$. Hence, the diagram on the left in Figure \ref{Splitting} implies $[\F:\K]=12,36$.

Now if the degree of $\F$ is $12$ then $\Gal{\F}{\K}\cong D_6$. In fact, the only transitive subgroups of $\mathcal S_6$ of order twelve are $D_6$ and $\mathcal A_4$, see \cite{Transitive}; the latter group does not have subgroups of order six and, under the Galois correspondence, this results in $\F$ not having a subfield of degree $2$. This is not the case since $\K(\zeta_3)\subset\F$.
\begin{lemma}
 Suppose that $[\F:\K]=36$ where the fields $\F$ and $\K$ are as before. Then we have that $\Gal{\F}{\K}\cong\mathcal S_3\times\mathcal S_3$.
\end{lemma}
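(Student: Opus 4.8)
The plan is to realise $G:=\Gal{\F}{\K}$ as a transitive subgroup of $\mathcal S_6$ of order $36$ and then to pin down its isomorphism type. First I would note that the six roots fall into the two triples $\{\alpha,\alpha\zeta_3,\alpha\zeta_3^2\}$ and $\{\beta,\beta\zeta_3,\beta\zeta_3^2\}$, characterised by whether the cube of the root equals $\frac{-a+\sqrt\Delta}2$ or $\frac{-a-\sqrt\Delta}2$; since every $\K$-automorphism sends $\sqrt\Delta$ to $\pm\sqrt\Delta$, it either preserves or interchanges these blocks. Hence $G$ embeds into the imprimitive group $\mathcal S_3\wr\mathcal S_2$ of order $72$, and as $f(x)$ is irreducible $G$ is transitive; by hypothesis $|G|=36$, so $G$ is an index-$2$ subgroup of $\mathcal S_3\wr\mathcal S_2$.

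To identify the right index-$2$ subgroup I would extract a distinguished quotient of $G$ from the field theory. The equality $[\F:\K]=36$ forces $\zeta_3\notin\K$ and $-3\Delta$ not a square in $\K$, for otherwise Lemma~\ref{lemma:zeta} places us in the right-hand diagram of Figure~\ref{Splitting}, where $[\F:\K]\in\{6,18\}$; it also forces $\sqrt\Delta\notin\K$, since $\sqrt\Delta\in\K$ would give $\alpha^3,\beta^3\in\K$ and hence $[\F:\K]\le18$. Consequently $M:=\K(\zeta_3,\sqrt\Delta)=\K(\sqrt{-3},\sqrt\Delta)$ is a genuine biquadratic extension, Galois over $\K$ with $\Gal{M}{\K}\cong C_2\times C_2$. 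Over $M$ we have $\alpha^3,\beta^3\in M$ and $\zeta_3\in M$, so $\F=M(\alpha,\beta)$ is a Kummer extension of exponent $3$; therefore $\Gal{\F}{M}$ is abelian of exponent $3$ and of order $36/4=9$, i.e. $\Gal{\F}{M}\cong C_3\times C_3$, and it is the unique (hence normal) Sylow $3$-subgroup of $G$. In particular $G$ sits in a short exact sequence $1\to C_3\times C_3\to G\to C_2\times C_2\to1$.

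The concluding step identifies $G$. By \cite{Atlas} the only transitive subgroups of $\mathcal S_6$ of order $36$ are $\mathcal S_3\times\mathcal S_3$ and $F_{36}\cong(C_3\times C_3)\rtimes C_4$. The group $F_{36}$ has cyclic abelianisation $C_4$ (the order-$4$ automorphism of $C_3\times C_3$ acts without nonzero fixed vectors), so it admits no surjection onto $C_2\times C_2$; since $G$ does surject onto $\Gal{M}{\K}\cong C_2\times C_2$, we must have $G\cong\mathcal S_3\times\mathcal S_3$.

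I expect the crux to be precisely this last separation of $\mathcal S_3\times\mathcal S_3$ from $F_{36}$: both are transitive of order $36$ with normal Sylow $3$-subgroup $C_3\times C_3$, so one genuinely needs an invariant such as the Klein-four quotient $\Gal{M}{\K}$ (equivalently, the presence of an element of order $4$, which $F_{36}$ has but $\mathcal S_3\times\mathcal S_3$ does not). If one prefers not to quote the classification, the isomorphism can be produced by hand: the conjugation action of $\Gal{M}{\K}$ on the Kummer group $\Gal{\F}{M}=\langle\sigma,\rho\rangle$ is given by $\sqrt\Delta\mapsto-\sqrt\Delta$ swapping the factors $\sigma\leftrightarrow\rho$ and $\zeta_3\mapsto\zeta_3^2$ inverting both, and in the basis $\{\sigma\rho,\sigma\rho^{-1}\}$ this becomes the standard sign action of $C_2\times C_2$ on $C_3\times C_3$, exhibiting $G\cong\mathcal S_3\times\mathcal S_3$ directly.
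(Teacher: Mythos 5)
Your proposal is correct, but it reaches the conclusion by a genuinely different decomposition than the paper. The paper plays the normal quadratic subfield $\K(\zeta_3)$ against the non-normal degree-$18$ subfield $\K(\sqrt[3]b\:,\alpha)$ (which intersect in $\K$), deduces via the Galois correspondence that $\Gal{\F}{\K}\cong G'\rtimes C_2$ with $|G'|=18$, and then eliminates the other transitive order-$36$ subgroup $F_{36}\cong(C_3\times C_3)\rtimes C_4$ of $\mathcal S_6$ on the grounds that all of its involutions lie in its unique subgroup of order $18$, so it admits no such semidirect decomposition. You instead work upward from the biquadratic field $M=\K(\zeta_3,\sqrt\Delta)$ --- whose genuineness you correctly reduce to Lemma \ref{lemma:zeta} together with irreducibility, since $\K(\zeta_3)=\K(\sqrt\Delta)$ would make $-3\Delta$ a square --- obtaining the surjection $\Gal{\F}{\K}\twoheadrightarrow C_2\times C_2$ and, by Kummer theory over $M$, the normal Sylow subgroup $\Gal{\F}{M}\cong C_3\times C_3$; you then kill $F_{36}$ because its abelianisation is $C_4$, which cannot surject onto $C_2\times C_2$. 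The two eliminations are dual in spirit (the paper detects an involution outside the order-$18$ subgroup, you detect a Klein-four quotient), but your route buys something extra: the exact sequence $1\to C_3\times C_3\to G\to C_2\times C_2\to 1$ together with your explicit computation of the conjugation action (which is well defined on the kernel precisely because the kernel is abelian) makes the appeal to the Atlas classification dispensable, whereas the paper's argument genuinely needs it. One small gap in that classification-free coda: identifying the sign action does not by itself exhibit $G$ as $(C_3\rtimes C_2)\times(C_3\rtimes C_2)$ --- you must first know the extension splits; since $\gcd(9,4)=1$ this follows at once from Schur--Zassenhaus (or by taking a Sylow $2$-subgroup as a complement), but it should be said explicitly. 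Your opening observation that $G$ embeds in $\mathcal S_3\wr\mathcal S_2$ with index $2$ is correct but ends up unused, since you quote the same Atlas fact as the paper for the list of transitive order-$36$ subgroups.
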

\begin{proof}
 First we note that $\F$ has $\K(\zeta_3)$ and $\K(\sqrt\Delta)$ as  distinct normal extension of degree $2$ because $\K(\zeta_3)\not\subset\K(\alpha)$,
 see Figure \ref{Splitting}. 
 We can already conclude that $\Gal{\F}{\K}\cong\mathcal S_3\times\mathcal S_3$;
 in fact, the only other transitive subgroup of $\mathcal S_6$ of order $36$, up to isomorphism, is $\left(C_3\times C_3\right)\rtimes_{\phi}C_4$, where the action $\phi$ is faithful \cite{Transitive}. This group has a unique subgroup
 of order $18$, while by Galois correspondence it should have at least two of them. 
\end{proof}
Note that $[\F:\K]=36$ implies $\K(\sqrt[3]b)\not\subset\K(\alpha)$, which also means that $\K(\alpha)\neq\K(\beta)$. Therefore, the subfield $\K(\alpha)$ is not a normal extension of $\K$ and then the isomorphism between $\Gal{\F}{\K}$
and $\mathcal S_3\times\mathcal S_3$ is not given by the diagram on the left in Figure \ref{Splitting}; in the sense that the subfield that corresponds to $\{0\}\times\mathcal S_3$ is not $\K(\alpha)$.

We conclude this section with the following useful lemma. As remarked before, we say that $\sqrt[3]b$ belongs to the field $\K$ if at least one of the three cubic roots of $b$ is
in $\K$.
\begin{lemma}
 \label{lemma:both}
 Consider $f(x)=x^6+ax^3+b$ irreducible over a field $\K$ with $\cchar(\K)\neq2$.
 Then we cannot have both $\sqrt[3]b\in\K$ and $R(x)=x^3-3bx+ab$ reducible over $\K$.
\end{lemma}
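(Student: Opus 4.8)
The plan is to argue by contradiction: assuming that both $\sqrt[3]b\in\K$ and $R(x)=x^3-3bx+ab$ is reducible over $\K$, I would exhibit an explicit proper factorization of $f(x)$ in $\K[x]$, contradicting its irreducibility. I would treat the generic case $\cchar(\K)\neq2,3$ (where all six roots and $\zeta_3$ are available) separately from the case $\cchar(\K)=3$ (where $\zeta_3$ collapses to $1$). Note first that irreducibility forces $b\neq0$ (otherwise $x\mid f$), so the chosen cube root $c:=\sqrt[3]b$ is a nonzero element of $\K$; by the freedom in selecting $\beta$ recalled in the preliminaries, I may fix $\beta$ so that $\alpha\beta=c\in\K$.

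For $\cchar(\K)\neq2,3$ the key observation is to pair the six roots as $\{\alpha\zeta_3^i,\beta\zeta_3^{2i}\}$ for $i=0,1,2$; this does exhaust all six roots, and each such pair has product $\alpha\beta\zeta_3^{3i}=c\in\K$, so that
\[ f(x)=\prod_{i=0}^{2} g_i(x), \qquad g_i(x)=x^2-m_ix+c, \qquad m_i=\alpha\zeta_3^i+\beta\zeta_3^{2i}. \]
I would then compute the elementary symmetric functions of the three elements $c\,m_i$, using $1+\zeta_3+\zeta_3^2=0$ together with $\alpha\beta=c$ and $\alpha^3+\beta^3=-a$ from the preliminaries. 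A short calculation gives $\sum_i m_i=0$, $\sum_{i<j}m_im_j=-3c$ and $m_0m_1m_2=\alpha^3+\beta^3=-a$; scaling by $c$ (and using $c^3=b$) shows that the three elements $c\,m_i$ are precisely the roots of $x^3-3bx+ab=R(x)$. Consequently, $R$ being reducible over $\K$ means it has a root in $\K$, i.e. $c\,m_i\in\K$ for some $i$, and since $c\neq0$ this yields $m_i\in\K$. Then $g_i(x)\in\K[x]$ is a factor of $f$ of degree two, contradicting the irreducibility of $f$.

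For $\cchar(\K)=3$ we have $R(x)=x^3-3bx+ab=x^3+ab$, and $\sqrt[3]b\in\K$ gives $b=c^3$ with $c\in\K$. If $R$ has a root $x_0\in\K$ then $x_0^3=-ac^3$, whence $a=(-x_0/c)^3$, i.e. $a=a_0^3$ with $a_0=-x_0/c\in\K$. Since $3ABC=0$ in characteristic three, the identity $A^3+B^3+C^3=(A+B+C)(A^2+B^2+C^2-AB-BC-CA)$ applied to $A=x^2$, $B=a_0x$, $C=c$ gives
\[ f(x)=x^6+a_0^3x^3+c^3=(x^2+a_0x+c)\,Q(x), \qquad \deg Q=4, \]
a proper factorization in $\K[x]$, again contradicting irreducibility. (This case does not even use separability, only irreducibility.)

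The main obstacle is conceptual rather than computational: it lies in spotting the correct pairing of the roots, namely matching $\alpha\zeta_3^i$ with $\beta\zeta_3^{2i}$ so that the constant terms of all three quadratic factors equal $c\in\K$, and then recognizing that the coefficients $m_i$ of these factors are, after scaling by $c$, exactly the roots of $R$. Once this is in place the symmetric-function identities are routine. The characteristic-three case must be handled on its own, since $\zeta_3$ is unavailable there and the quadratic factorization is replaced by the cube-sum identity above.
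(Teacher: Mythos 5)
Your proof is correct, and it shares its engine with the paper's proof while finishing differently. Both arguments rest on the same key computation: the roots of $R(x)$ are exactly $\alpha\beta(\alpha+\beta)$, $\alpha\beta(\alpha\zeta_3+\beta\zeta_3^2)$, $\alpha\beta(\alpha\zeta_3^2+\beta\zeta_3)$ --- your $c\,m_i$; your symmetric-function identities $\sum_i m_i=0$, $\sum_{i<j}m_im_j=-3c$, $m_0m_1m_2=-a$ all check out, as does the scaling showing the $c\,m_i$ are the roots of $x^3-3bx+ab$. The difference is the endgame. The paper normalizes $\alpha,\beta$ so that $\sqrt[3]b\,(\alpha+\beta)\in\K$ and some $\alpha\beta\,\zeta_3^i\in\K$; then $\alpha+\beta$ and $\alpha\beta$ lie in $\K(\zeta_3)$, so $\alpha$ satisfies a quadratic over $\K(\zeta_3)$ and $[\K(\alpha):\K]\leq 4$, contradicting degree six. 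You instead fix $\alpha\beta=c\in\K$ on the nose, note that a cubic reducible over $\K$ has a root in $\K$, deduce $m_i\in\K$ for some $i$ (since $c\neq 0$), and exhibit $x^2-m_ix+c\in\K[x]$ as an explicit quadratic factor of $f$ --- a direct contradiction that never leaves $\K$ and is marginally more explicit than the paper's degree count. A second genuine point in your favor: you handle $\cchar(\K)=3$ separately and correctly, via $a=a_0^3$ and the cube identity $A^3+B^3+C^3=(A+B+C)(A^2+B^2+C^2-AB-BC-CA)$ (equivalently $f=(x^2+a_0x+c)^3$ by Frobenius), whereas the paper's proof only remarks that the roots of $R$ are distinct when $\cchar(\K)\neq 3$ and leaves the characteristic-three reading implicit (it does still work there, with $\zeta_3=1$ and $\K(\zeta_3)=\K$, giving $[\K(\alpha):\K]\leq 2$). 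Your pairing $\{\alpha\zeta_3^i,\beta\zeta_3^{2i}\}$ does exhaust the six roots, and $g_i\mid f$ in $\K[x]$ follows from monic division, so there is no gap.
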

\begin{proof}
 We can easily check that the three roots of $R(x)$, in the algebraic closure of $\K$, are $\{\alpha\beta(\alpha+\beta),\alpha\beta(\alpha\zeta_3+\beta\zeta_3^2),\alpha\beta(\alpha\zeta_3^2+\beta\zeta_3)\}$. Note that these roots are all distinct if $\cchar(\K)\neq3$.
 
 We can choose $\alpha$ and $\beta$ in the way that $\alpha\beta(\alpha+\beta)\in\K$ and there is an $i$ such that $\sqrt[3]b=\alpha\beta\zeta_3^i\in\K$. This tells us that $\alpha$ is the root of a degree two polynomial over $\K(\zeta_3)$.
 
 Since $[\K(\zeta_3):\K]\leq2$ we have that $[\K(\alpha):\K]\leq4$, but this is a contradiction because $\alpha$ is the root of an irreducible polynomial of degree six over $\K$.
\end{proof}

\section{The Galois groups}
\label{section:group}
\subsection{Proof of Theorem \ref{teo:main}: cubic roots of unity not in \texorpdfstring{$\K$}{K}}
\label{subsection:not}
We recall that $\cchar(\K)$ still cannot be equal to 2; and it is necessarily different from 3. Suppose for now that $\Delta\neq-3n^2$ for every $n\in\K$, which means that $\K(\zeta_3)\not\subset\K(\alpha)$ from Lemma \ref{lemma:zeta}.
\begin{prop}
 \label{prop:cube}
 If $\sqrt[3]b\in\K$ and $\Delta\neq-3n^2$ then $\Gal{\F}{\K}\cong D_6$.
\end{prop}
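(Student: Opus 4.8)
The plan is to reduce the whole statement to a single degree computation, since the discussion preceding this proposition has already shown that, in the regime $\K(\zeta_3)\not\subset\K(\alpha)$, the equality $[\F:\K]=12$ forces $\Gal{\F}{\K}\cong D_6$ (the competing transitive subgroup $\mathcal A_4$ of $\mathcal S_6$ is ruled out because it has no subgroup of order six, whereas $\K(\zeta_3)$ is a degree-two subfield of $\F$). Thus it suffices to prove that, under the hypotheses $\sqrt[3]b\in\K$ and $\Delta\neq-3n^2$, we have $[\F:\K]=12$ rather than $36$.

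First I would exploit the hypothesis $\sqrt[3]b\in\K$ to collapse $\beta$ into $\K(\alpha)$. Choosing $\alpha$ and $\beta$ so that the product $\alpha\beta=\sqrt[3]b$ is the cubic root lying in $\K$, we get $\beta=\sqrt[3]b\,\alpha^{-1}\in\K(\alpha)$; here the freedom in selecting $\beta$ among its three values is exactly what lets us align $\alpha\beta$ with the root in $\K$. Consequently $\F=\K(\zeta_3,\alpha,\beta)=\K(\zeta_3,\alpha)$, so the splitting field is obtained from $\K(\alpha)$ by a single adjunction of $\zeta_3$.

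The degree then factors through the tower $\K\subset\K(\alpha)\subset\F$. On the bottom, $[\K(\alpha):\K]=6$ because $\alpha$ is a root of the irreducible sextic $f$. On the top, Lemma~\ref{lemma:zeta} applied to $\Delta\neq-3n^2$ gives $\zeta_3\notin\K(\alpha)$; as $\zeta_3$ is a root of $x^2+x+1$, which stays irreducible over any field not containing $\zeta_3$, we get $[\F:\K(\alpha)]=2$. Multiplying, $[\F:\K]=2\cdot6=12$, and the classification recalled above finishes the proof.

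I do not expect a genuine obstacle here: the argument is essentially a degree count resting on Lemma~\ref{lemma:zeta}. The only point demanding care is that $\sqrt[3]b$ depends on the choice of $\alpha$ and $\beta$, so one must verify that the roots can be chosen compatibly, making $\alpha\beta$ simultaneously a cube root of $b$ lying in $\K$; this is guaranteed by the earlier remark that fixing $\alpha$ leaves three choices for $\beta$, and hence three values for $\sqrt[3]b$.
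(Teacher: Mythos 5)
Your proof is correct and takes essentially the same approach as the paper: both arguments reduce the proposition to showing $[\F:\K]=12$ (the paper via the $12$-or-$36$ dichotomy from the diagram together with $[\LL:\K]=1$, you via the tower $\K\subset\K(\alpha)\subset\F=\K(\alpha,\zeta_3)$ and Lemma~\ref{lemma:zeta}) and then invoke the previously established fact that $D_6$ is the only transitive subgroup of $\mathcal S_6$ of order twelve admitting the quadratic subfield $\K(\zeta_3)$. Your explicit check that $\beta$ can be chosen so that $\alpha\beta$ is the cube root of $b$ lying in $\K$, whence $\beta\in\K(\alpha)$, is a detail the paper leaves implicit but is the same underlying degree count.
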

\begin{proof}
 We have already seen in the previous section that, in this case, the order of the Galois group can be $12$ or $36$; depending on the degree of $\LL=[\K(\zeta_3,\sqrt[3]b)\cap\K(\alpha):\K]$. Hence, since $\sqrt[3]b\in\K$  we have that $[\LL:\K]=1$ and this implies the thesis; in fact, we saw that $D_6$ is the only option when $[\F:\K]=12$. 
\end{proof}
From now on, we also suppose that $\sqrt[3]b\notin\K$. Moreover, we say that
$\Gal{\F}{\K}\cong D_6$ and study what happens to the coefficients $a$ and $b$.

First, we use Lemma \ref{lemma:triplets} to ensure that $\alpha\neq\beta$.
Since $[\F:\K]=12$ there is a cubic subfield of $\K(\zeta_3,\sqrt[3]b)$ inside $\K(\alpha)$. We take $\beta$ such that \[\sqrt[3]b=\alpha\beta\in\K(\alpha)=\K(\beta)\:.\]
We can describe all twelve automorphisms in the Galois group. The set \[\mathcal B=\left\{\alpha^i\zeta_3^j\:|\:i=0,...,5;\:j=1,2\right\}\] is a basis of $\F$ as a $\K$-vector space. Then $F\in\Gal{\F}{\K}$ is determined by:
\[\begin{aligned}
&F(\alpha)=\alpha\zeta_3^i\:\text{ or }\:\beta\zeta_3^k\:\:\:\:\:\hspace{2cm}i=0,1,2;\:\:\:k=0,1,2;\\
&F(\zeta_3)=\zeta_3^j\:\:\:\:\:\hspace{5cm}j=1,2\:.
\end{aligned}\]
These are all the possibilities, because $\alpha$ is a root of $f(x)$ and then $F(\alpha)$
needs to be a root too; moreover, clearly a primitive root of unity will be send to another primitive root of the same order.
We need to compute $F(\beta)$ and $F(\sqrt[3]b)$ for every $F$ in the Galois group. 

We denote the automorphisms with the following notation: we call $F_{(i,j)}$ the map that sends $\zeta_3$ to $\zeta_3^j$ for $j=1,2$, $\alpha$ to $\alpha\zeta_3^i$ for $i=0,1,2$ and $\alpha$ to $\beta\zeta_3^{i}$ for $i=3,4,5$. 

Tha mapo $F_{(0,1)}$ is the Identity and then this case is easy. The subfield $\K(\zeta_3)$ is a normal quadratic extension of $\K$. Therefore, it is fixed by the characteristic subgroup of order six of $D_6$. This is because $\K(\zeta_3)\subset\K(\zeta_3,\sqrt[3]b)$, which is a normal extension of $\K$ of degree six, and there is only one subgroup of $D_6$ of order six containing 
a normal subgroup of order two.

The previous argument tells us that $\K(\zeta_3)$ is fixed by the automorphisms $F_{(i,1)}$ for $i=0,...,5$ and then $F_{(i,2)}$ are the six symmetries of $D_6$.

Since $[\K(\alpha):\K]=6$ we have that $F_{(0,2)}$ fixes $\K(\alpha)$. Furthermore, the fact that $\K(\alpha)=\K(\beta)$ implies that $F_{(0,2)}$ also fixes $\beta$, and consequently $\sqrt[3]b$.
In the same way, the map $F_{(3,2)}$ is a symmetry and then it has order two.
This means that $F_{(3,2)}(\beta)=\alpha$. We can now prove the following lemma.
\begin{lemma}
 \label{lemma:subfield}
 If $\alpha$ and $\beta$ are taken as before then $\K(\alpha+\beta)\subset\K(\sqrt[3]b)$.
\end{lemma}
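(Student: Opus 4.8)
The plan is to use the Galois correspondence applied to the intermediate field $\K(\sqrt[3]b)$. Since we are assuming $\zeta_3\notin\K$ and $\sqrt[3]b\notin\K$, the polynomial $x^3-b$ has no root in $\K$ and hence is irreducible, so $[\K(\sqrt[3]b):\K]=3$. As $[\F:\K]=12$, this gives $[\F:\K(\sqrt[3]b)]=4$, i.e. the subgroup $H=\Gal{\F}{\K(\sqrt[3]b)}$ has order four. I would prove the lemma by showing that every element of $H$ fixes $\alpha+\beta$; since $\F/\K$ is Galois the fixed field of $H$ is exactly $\K(\sqrt[3]b)$, and this yields $\alpha+\beta\in\K(\sqrt[3]b)$.

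First I would identify $H$ explicitly among the twelve automorphisms $F_{(i,j)}$. Its elements are precisely those fixing $\sqrt[3]b=\alpha\beta$. We already know that $F_{(0,1)}$ is the identity, that $F_{(0,2)}$ fixes $\K(\alpha)$ (hence $\alpha$, $\beta$ and $\sqrt[3]b$), and that $F_{(3,2)}$ is the order-two symmetry with $F_{(3,2)}(\alpha)=\beta$ and $F_{(3,2)}(\beta)=\alpha$, which therefore fixes $\alpha\beta$. Composing, $F_{(0,2)}\circ F_{(3,2)}$ sends $\alpha\mapsto\beta$, $\beta\mapsto\alpha$ and $\zeta_3\mapsto\zeta_3$; call it $F_{(3,1)}$. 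These four maps commute and form a Klein four-group, all of whose elements fix $\sqrt[3]b$ and fix $\K$; since $|H|=4$ they constitute all of $H$.

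It then remains to evaluate each of them on $\alpha+\beta$. The identity fixes it; $F_{(0,2)}$ fixes $\alpha$ and $\beta$ separately; and both $F_{(3,2)}$ and $F_{(3,1)}$ interchange $\alpha$ and $\beta$, so they fix the symmetric sum $\alpha+\beta$ as well. Hence $\alpha+\beta$ lies in the fixed field of $H$, which is $\K(\sqrt[3]b)$, and the lemma follows.

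The step I expect to be the main obstacle is the explicit determination of $H$: one must be careful that the notation $F_{(i,j)}$ only records the images of $\alpha$ and $\zeta_3$, while the image of $\beta$ is then forced by the relations $\alpha\beta=\sqrt[3]b$ and $\sqrt\Delta=2\alpha^3+a$. In particular one has to rule out that an automorphism with $F(\alpha)=\alpha\zeta_3^i$ or $F(\alpha)=\beta\zeta_3^k$ with nonzero exponent could also fix $\sqrt[3]b$ (such a map would not fix $\alpha+\beta$); this is exactly what the order count $|H|=4$ prevents, since $F_{(0,1)},F_{(0,2)},F_{(3,1)},F_{(3,2)}$ already exhaust a subgroup of that order. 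Once this is settled, the conclusion is immediate from the Galois correspondence.
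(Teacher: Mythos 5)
Your proof is correct, and it takes a genuinely different route from the paper's. The paper proves this lemma by a resolvent computation: it shows $(\alpha+\beta)^3=-a+3\sqrt[3]b\,(\alpha+\beta)$, so that $\alpha+\beta$ is a root of $g(x)=x^3-3\sqrt[3]b\,x+a\in\K(\sqrt[3]b)[x]$, then uses the discriminant ($\sqrt{-27\Delta}\notin\K(\sqrt[3]b)$) to rule out $g$ splitting, a degree count ($9\nmid 6$) to rule out $g$ being irreducible, and finally excludes the leftover possibility $\K(\alpha+\beta)=\K(\alpha)$ by noting that $F_{(3,2)}$ fixes $\alpha+\beta$. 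You bypass the cubic $g(x)$ entirely: since $x^3-b$ is irreducible when no cube root of $b$ lies in $\K$, you get $\left|\Gal{\F}{\K(\sqrt[3]b)}\right|=12/3=4$, you exhibit four distinct automorphisms fixing $\sqrt[3]b$, and each visibly fixes $\alpha+\beta$, so the Galois correspondence finishes. The one point that needed care is exactly the one you flagged, and you handle it correctly: the paper only determines $F_{(3,1)}(\beta)=\alpha$ \emph{after} this lemma (indeed, as a consequence of it), so quoting the table of Figure \ref{D_6} here would be circular; instead you realize $F_{(3,1)}$ as the composition $F_{(0,2)}\circ F_{(3,2)}$ of two maps whose action on $\beta$ is established before the lemma ($F_{(0,2)}$ fixes $\K(\alpha)=\K(\beta)$ pointwise, and $F_{(3,2)}$ is an order-two symmetry swapping $\alpha$ and $\beta$), which legitimately gives $\alpha\mapsto\beta$, $\beta\mapsto\alpha$, $\zeta_3\mapsto\zeta_3$. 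Your order count is also precisely what excludes the dangerous hypothetical maps such as $\alpha\mapsto\alpha\zeta_3$, $\beta\mapsto\beta\zeta_3^2$, which would fix $\alpha\beta$ but move $\alpha+\beta$: any such map would be a fifth element of a group of order four. Comparing the two routes: yours is shorter, purely group-theoretic, needs no discriminant computation, and as a by-product determines $F_{(3,1)}$ independently, which would streamline the paper's subsequent classification of the twelve automorphisms; the paper's route instead produces the auxiliary polynomial $g(x)$, a descended form of $R(x)=x^3-3bx+ab$, which foreshadows the resolvent arguments used later in Propositions \ref{prop:D_6} and \ref{prop:S_3}.
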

\begin{proof}
 We do some computations:
 \[(\alpha+\beta)^3=\alpha^3+\beta^3+3\alpha\beta(\alpha+\beta)=-a+3\sqrt[3]b(\alpha+\beta)\] and then $\alpha+\beta$ is a root of $g(x)=x^3-3\sqrt[3]b\:x+a\in\K(\sqrt[3]b)[x]$.
 
 The polynomial $g(x)$ does not have all three roots in $\K(\sqrt[3]b)$ because \[\sqrt{\text{disc(g(x))}}=\sqrt{-27\Delta}=3\cdot i\sqrt3\cdot\sqrt\Delta\notin\K(\sqrt[3]b)\:.\]
 Furthermore, $g(x)$ is not irreducible because otherwise $9$ would divide $6$.
 
 At the end, we have that either $\K(\alpha+\beta)\subset\K(\sqrt[3]b)$ or $[\K(\alpha+\beta):\K]=6$ and $\K(\alpha+\beta)=\K(\alpha)$. In the latter case $\alpha+\beta$ should only be fixed by $F_{(0,2)}$; this is not true because $F_{(3,2)}(\alpha+\beta)=\alpha+\beta$. 
\end{proof}
Now we can study the other automorphisms. The maps $F_{(1,1)}$ and $F_{(2,1)}$ are in the cyclic subgroup $H$ of order six of $D_6$. Since $F_{(1,1)}^3(\alpha)=F_{(2,1)}^3(\alpha)=\alpha$, we have that they are the elements of order three of the Galois group. Let us compute $F_{(1,1)}(\beta)$: it is easy to check that $\alpha\neq\beta$ implies $F_{(1,1)}(\beta)=\beta\zeta_3^j$, where $j$ is not zero because $F_{(1,1)}$ cannot fix $\beta$. Moreover, if $j=2$ then $F_{(1,1)}$
would fix $\sqrt[3]b$; this is impossible since $\K(\sqrt[3]b)$ is a cubic extension of $\K$ and then it is fixed by an order four subgroup of $D_6$. We conclude that $F_{(1,1)}(\beta)=\beta\zeta_3$ and, using the same proof, $F_{(2,1)}(\beta)=\beta\zeta_3^2$.

We now consider the other symmetries. Since their order is two we immediately obtain that $F_{(i,2)}(\beta)=\alpha\zeta_3^{i}$ for $i=4,5$. On the other hand, as before we have that $F_{(1,2)}(\beta)=\beta\zeta_3^j$ and $j$ is not zero. Again, if $j=2$ then $F_{(1,2)}$
would fix $\sqrt[3]b$; this is still impossible since $F_{(0,2)}$ and $F_{(3,2)}$ are the only two symmetries that could fix $\sqrt[3]b$.
We have gotten that $F_{(1,2)}(\beta)=\beta\zeta_3$ and again the same proof also gives $F_{(2,2)}(\beta)=\beta\zeta_3^2$.

We have three automorphisms left to consider: $F_{(3,1)},F_{(4,1)}$ and $F_{(5,1)}$. For Lemma \ref{lemma:subfield} the order of $F_{(3,1)}$ is two; in fact, it says that the order two element in $H$ has to fix $\alpha+\beta$. It is an easy check that this cannot happen if such an element is $F_{(4,1)}$ or $F_{(5,1)}$. In conclusion, one has $F_{(3,1)}(\beta)=\alpha$.

The maps $F_{(4,1)}$ and $F_{(5,1)}$ are the two elements of order six in $D_6$. We know that these two maps send $\beta$ to $\alpha\zeta_3^i$ for some $i$ because one has $\alpha\neq\beta$; in order to determine $i$ we use the following relation in $D_6$: \[r^k=sr^{6-k}s\:\:\:\:\:\text{for }k=0,...,5\:,\] where $s$ is a symmetry and $r$ is an order six rotation.
We use this equation with $k=5,r=F_{(4,1)}$ and $s=F_{(3,2)}$ and we get
\[\beta\zeta_3^2=F_{(5,1)}(\alpha)=(F_{(3,2)}\circ F_{(4,1)}\circ F_{(3,2)})(\alpha)=\beta\zeta_3^{2i}\:,\] which means that $F_{(4,1)}(\beta)=\alpha\zeta_3^i=\alpha\zeta_3$, and
\[F_{(5,1)}(\beta)=(F_{(3,2)}\circ F_{(4,1)}\circ F_{(3,2)})(\beta)=\alpha\zeta_3^{2}\:.\]
This concludes the study of the automorphisms in $\Gal{\F}{\K}$. We summarize the results in the following table.
\begin{figure}[H]
\begin{center}
 \renewcommand{\arraystretch}{2}
  \begin{tabular}{ | c | c || c | c | }
    \hline
    $F_{(0,1)}(\beta)=\beta$ & $F_{(0,1)}(\sqrt[3]b)=\sqrt[3]b$ & $F_{(0,2)}(\beta)=\beta$ & $F_{(0,2)}(\sqrt[3]b)=\sqrt[3]b$ \\ \hline
    $F_{(1,1)}(\beta)=\beta\zeta_3$ & $F_{(1,1)}(\sqrt[3]b)=\sqrt[3]b\:\zeta_3^2$ & $F_{(1,2)}(\beta)=\beta\zeta_3$ & $F_{(1,2)}(\sqrt[3]b)=\sqrt[3]b\:\zeta_3^2$ \\ \hline
    $F_{(2,1)}(\beta)=\beta\zeta_3^2$ & $F_{(2,1)}(\sqrt[3]b)=\sqrt[3]b\:\zeta_3$ & $F_{(2,2)}(\beta)=\beta\zeta_3^2$ & $F_{(2,2)}(\sqrt[3]b)=\sqrt[3]b\:\zeta_3$ \\ \hline 
    $F_{(3,1)}(\beta)=\alpha$ & $F_{(3,1)}(\sqrt[3]b)=\sqrt[3]b$ &  $F_{(3,2)}(\beta)=\alpha$ & $F_{(3,2)}(\sqrt[3]b)=\sqrt[3]b$ \\ \hline
    $F_{(4,1)}(\beta)=\alpha\zeta_3$ & $F_{(4,1)}(\sqrt[3]b)=\sqrt[3]b\:\zeta_3^2$ & $F_{(4,2)}(\beta)=\alpha\zeta_3$ & $F_{(4,2)}(\sqrt[3]b)=\sqrt[3]b\:\zeta_3^2$ \\ \hline
    $F_{(5,1)}(\beta)=\alpha\zeta_3^2$ & $F_{(5,1)}(\sqrt[3]b)=\sqrt[3]b\:\zeta_3$ & $F_{(5,2)}(\beta)=\alpha\zeta_3^2$ & $F_{(5,2)}(\sqrt[3]b)=\sqrt[3]b\:\zeta_3$ \\
    \hline
  \end{tabular}
 \renewcommand{\arraystretch}{1}
\end{center}
\caption{The values of $\beta$ and $\sqrt[3]b$ for the twelve                      automorphisms of $\Gal{\F}{\K}\cong D_6$.}
\label{D_6}
\end{figure} 
We have completely described the Galois group when $\sqrt[3]b\notin\K$ and $[\F:\K]=12$. This allows us to determine when $\Gal{\F}{\K}$ is isomorphic to the Dihedral group $D_6$.
\begin{prop}
 \label{prop:D_6}
 Take a field $\K$, not containing all the cubic roots of unity, such that $\cchar(\K)\neq2$ and an irreducible polynomial $f(x)=x^6+ax^3+b$ over $\K$. Suppose that $\sqrt[3]b\notin\K$ and $\Delta\neq-3n^2$ for every $n\in\K$.
 
 Then $\Gal{\F}{\K}\cong D_6$ if and only if $R(x)=x^3-3bx+ab$ is reducible over $\K$, where $\F$ is the splitting field of $f(x)$.
\end{prop}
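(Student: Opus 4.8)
The plan is to reduce the biconditional to a degree equality and then prove the two implications by different means. Under the standing hypotheses ($\zeta_3\notin\K$, $\Delta\neq-3n^2$ and $\sqrt[3]b\notin\K$) we saw in Section \ref{section:preliminaries} that $[\F:\K]$ is either $12$ or $36$, that $[\F:\K]=12$ forces $\Gal{\F}{\K}\cong D_6$, and that $[\F:\K]=36$ forces $\Gal{\F}{\K}\cong\mathcal S_3\times\mathcal S_3$. Hence $\Gal{\F}{\K}\cong D_6$ if and only if $[\F:\K]=12$, and it suffices to prove that $[\F:\K]=12$ is equivalent to the reducibility of $R(x)$. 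Since $R(x)$ is a cubic, reducibility over $\K$ is the same as having a root in $\K$, and by (the proof of) Lemma \ref{lemma:both} its three roots are $\alpha\beta(\alpha+\beta)$, $\alpha\beta(\alpha\zeta_3+\beta\zeta_3^2)$ and $\alpha\beta(\alpha\zeta_3^2+\beta\zeta_3)$, with $\alpha\beta=\sqrt[3]b$.

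For the implication $\Gal{\F}{\K}\cong D_6\Rightarrow R(x)$ reducible I would use the table in Figure \ref{D_6}. The key observation is that, for every automorphism $F$, the power of $\zeta_3$ acquired by $\sqrt[3]b$ is exactly inverse to the one acquired by $\alpha+\beta$: reading off the table together with the labelling convention for $F_{(i,j)}$, whenever $F(\sqrt[3]b)=\sqrt[3]b\,\zeta_3^{k}$ one checks that $F(\alpha+\beta)=(\alpha+\beta)\zeta_3^{-k}$ (in the cases $i=3,4,5$ the roles of $\alpha$ and $\beta$ are swapped, but the $\zeta_3$-power is unchanged). Consequently $F\bigl(\sqrt[3]b\,(\alpha+\beta)\bigr)=\sqrt[3]b\,(\alpha+\beta)$ for all $F\in\Gal{\F}{\K}$, so $\sqrt[3]b\,(\alpha+\beta)$ lies in $\K$. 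As this element is a root of $R(x)$, the polynomial $R(x)$ is reducible over $\K$.

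For the converse I would argue directly with field degrees. Suppose $R(x)$ has a root $r\in\K$; then $r$ is one of the three roots above, and in each case we may relabel the cube roots as $\alpha',\beta'$ — both genuine roots of $f(x)$, with $v:=\alpha'\beta'$ a cube root of $b$ — so that $r=v(\alpha'+\beta')$. For instance, for the second root one takes $\alpha'=\alpha\zeta_3$ and $\beta'=\beta\zeta_3^2$, so that $\alpha'\beta'=\alpha\beta\zeta_3^3=\sqrt[3]b$; the third root is handled symmetrically. Since $v=\sqrt[3]b\neq0$, put $u:=\alpha'+\beta'=r/v\in\K(\sqrt[3]b)$. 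Then $\alpha'$ and $\beta'$ are the two roots of $t^2-ut+v\in\K(\sqrt[3]b)[t]$, hence both lie in $\K\bigl(\sqrt[3]b,\sqrt{u^2-4v}\bigr)$, a field of degree at most $6$ over $\K$. But $[\K(\alpha'):\K]=6$ because $\alpha'$ is a root of the irreducible $f(x)$, which forces $\K(\alpha')=\K\bigl(\sqrt[3]b,\sqrt{u^2-4v}\bigr)$; therefore $\beta'=u-\alpha'\in\K(\alpha')$ and $\sqrt[3]b=v\in\K(\alpha')$. Thus $\F=\K(\zeta_3,\alpha',\beta')=\K(\alpha')(\zeta_3)$, and since $\alpha'^3=\frac{-a+\sqrt\Delta}{2}$ the criterion of Lemma \ref{lemma:zeta} applies verbatim to $\alpha'$ and gives $\zeta_3\notin\K(\alpha')$ (as $\Delta\neq-3n^2$). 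Hence $[\F:\K]=2\cdot6=12$, so $\Gal{\F}{\K}\cong D_6$.

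The main obstacle I expect is organizational rather than conceptual: one must check that the relabeling in the converse is legitimate for each of the three roots of $R(x)$ — that is, that the new $\alpha',\beta'$ are actual roots of $f(x)$, that their product is a cube root of $b$, and, most importantly, that Lemma \ref{lemma:zeta} (stated for the distinguished $\alpha$) still yields $\zeta_3\notin\K(\alpha')$. This last point is exactly what makes the degree count $[\F:\K]=12$ go through, and it holds because the only input to Lemma \ref{lemma:zeta} is the value of the cube, namely $\alpha'^3=\alpha^3=\frac{-a+\sqrt\Delta}{2}$.
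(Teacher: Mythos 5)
Your proposal is correct, and while its overall skeleton and forward direction match the paper, your converse takes a genuinely different route. The reduction to the dichotomy $[\F:\K]=12$ (forcing $D_6$) versus $[\F:\K]=36$ (forcing $\mathcal S_3\times\mathcal S_3$) is exactly the paper's setup, and your forward implication is the paper's as well: it too reads off Figure \ref{D_6} that $\alpha\beta(\alpha+\beta)$ is fixed by all twelve automorphisms and hence lies in $\K$; the only cosmetic difference is that the paper then verifies $R(c)=0$ by a direct symmetric-function computation ($-ab=c^3-3bc$), whereas you cite the root list $\{\alpha\beta(\alpha\zeta_3^i+\beta\zeta_3^{-i})\}$ from the proof of Lemma \ref{lemma:both} --- both are fine. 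For the converse, the paper argues by contradiction: choosing $\alpha,\beta$ so that $c=\alpha\beta(\alpha+\beta)\in\K$, it expresses $\beta=-\frac{1}{b}\alpha^5\sqrt[3]b-\frac{a}{b}\alpha^2\sqrt[3]b$ and derives the nontrivial relation $b\alpha-\alpha^5\sqrt[3]b-a\alpha^2\sqrt[3]b-c\sqrt[3]{b^2}=0$ among the elements of $\mathcal C=\left\{\alpha^i\sqrt[3]{b^j}\right\}$, which would be a $\K$-basis of $\K(\sqrt[3]b,\alpha)$ if $[\F:\K]=36$; the degree is therefore $12$ by elimination. You instead compute the degree affirmatively: from $r=v(\alpha'+\beta')\in\K$ you place $\alpha'$ inside $\K\bigl(\sqrt[3]b,\sqrt{u^2-4v}\bigr)$, a field of degree at most $6$ over $\K$ (here $\cchar(\K)\neq2$ is used for the quadratic formula), force equality with $\K(\alpha')$, conclude $\beta',\sqrt[3]b\in\K(\alpha')$, and then get $[\F:\K]=2\cdot6=12$ from Lemma \ref{lemma:zeta} --- and your observation that the lemma applies after relabeling because it only depends on $\alpha'^3=\alpha^3$ is precisely the right justification. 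The two converses carry equivalent content (your containment $\sqrt[3]b\in\K(\alpha')$ is exactly the degeneration that the paper's dependence relation detects), but yours is constructive: it identifies $\K(\alpha')=\K\bigl(\sqrt[3]b,\sqrt{u^2-4v}\bigr)$ explicitly and recovers, as a byproduct, the paper's remark following the proposition that a cube root of $b$ generates the cubic subfield of $\K(\alpha')=\K(\beta')$, at the cost of the case-by-case relabeling bookkeeping, which you carry out correctly; note also that the paper's appeal to $\text{disc}(R(x))=-27\Delta$ being a non-square plays no role in your version.
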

\begin{proof}
 We start with the only if implication. We have that $\alpha\beta(\alpha+\beta)=c\in\K$ because is fixed by all the automorphisms in $\Gal{\F}{\K}$, see Figure \ref{D_6}.
 
 Then one has \[c^2=\alpha^2\beta^2(\alpha+\beta)^2=\alpha^2\beta^2(\alpha^2+\beta^2)+2b\] which implies that
 \[-ab=b(\alpha^3+\beta^3)=b(\alpha+\beta)(\alpha^2-\alpha\beta+\beta^2)=c(c^2-2b-b)=c^3-3bc\:.\]
 
 Conversely, if $R(x)$ is reducible over $\K$ then, since $\text{disc}(R(x))=-27\Delta$ is not a square in $\K$, we can choose 
 $\alpha$ and $\beta$ in the way that $\alpha\beta(\alpha+\beta)=c\in\K$.
 We recall that we observed in the previous section that the three roots of $R(x)$ are $\alpha\beta(\alpha+\beta),\alpha\beta(\alpha\zeta_3+\beta\zeta_3^2)$ and $\alpha\beta(\alpha\zeta_3^2+\beta\zeta_3)$.
 
 In other words, one has $\alpha+\beta=\frac{c}{b}\sqrt[3]{b^2}$. Since
 $\beta=-\frac{1}{b}\cdot\alpha^5\sqrt[3]b-\frac{a}{b}\cdot\alpha^2\sqrt[3]b$ we get 
 \begin{equation}
  \label{combination}
  b\alpha-\alpha^5\sqrt[3]b-a\alpha^2\sqrt[3]b-c\sqrt[3]{b^2}=0
 \end{equation}
 which is a non-trivial $\K$-linear combination between elements of \[\mathcal C=\left\{\alpha^i\sqrt[3]{b^j}\:|\:i=0,...,5;\:j=0,1,2\right\}\:.\] Suppose that $[\F:\K]=36$ then $[\K(\sqrt[3]b,\alpha):\K]=18$ and $\mathcal C$ is a basis of $\K(\sqrt[3]b,\alpha)$; this is a contradiction because of Equation \eqref{combination}.
\end{proof}
In particular, it follows that, under the hypothesis of Proposition \ref{prop:D_6}, we can choose $\alpha$ and $\beta$ in the way that $\alpha+\beta=\frac{c}{b}\sqrt[3]{b^2}$, where $c$ is the rational root of $R(x)$. Moreover, we have that $\K(\sqrt[3]b)$ is the only cubic subfield of $\K(\alpha)=\K(\beta)$.
\begin{cor}
 Suppose that $\cchar(\F)\neq2$, the field $\F$ does not contain all the cubic roots of unity and $\Delta\neq-3n^2$. 
 
 We have that $\Gal{\F}{\K}\cong\mathcal S_3\times\mathcal S_3$ if and only if $\sqrt[3]b\notin\K$ and $R(x)$ is irreducible over $\K$.
\end{cor}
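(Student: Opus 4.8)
The plan is to observe that the real content has already been established in Propositions \ref{prop:cube} and \ref{prop:D_6}, so that the corollary follows by a purely logical combination of those results together with the dichotomy recorded at the start of this subsection. First I would recall that, under the standing hypotheses $\cchar(\K)\neq2$, $\zeta_3\notin\K$ and $\Delta\neq-3n^2$, the diagram on the left in Figure \ref{Splitting} forces $[\F:\K]\in\{12,36\}$, and we saw that $[\F:\K]=12$ yields $\Gal{\F}{\K}\cong D_6$ while $[\F:\K]=36$ yields $\Gal{\F}{\K}\cong\mathcal S_3\times\mathcal S_3$. Consequently $D_6$ and $\mathcal S_3\times\mathcal S_3$ are the only two possibilities, so that the condition $\Gal{\F}{\K}\cong\mathcal S_3\times\mathcal S_3$ is exactly equivalent to $\Gal{\F}{\K}\not\cong D_6$.

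Next I would assemble the characterization of when the group is $D_6$. Proposition \ref{prop:cube} tells us that $\sqrt[3]b\in\K$ already forces $\Gal{\F}{\K}\cong D_6$, while Proposition \ref{prop:D_6} treats the complementary case $\sqrt[3]b\notin\K$ and shows that there $\Gal{\F}{\K}\cong D_6$ holds precisely when $R(x)$ is reducible over $\K$. Since these two cases exhaust all possibilities, we conclude that $\Gal{\F}{\K}\cong D_6$ if and only if either $\sqrt[3]b\in\K$, or $\sqrt[3]b\notin\K$ and $R(x)$ is reducible. Lemma \ref{lemma:both} guarantees that the first of these alternatives cannot coexist with reducibility of $R(x)$, so there is no ambiguity in the bookkeeping.

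To finish I would simply negate this statement. For the forward implication, if $\Gal{\F}{\K}\cong\mathcal S_3\times\mathcal S_3$ then $\Gal{\F}{\K}\not\cong D_6$; the contrapositive of Proposition \ref{prop:cube} gives $\sqrt[3]b\notin\K$, and then Proposition \ref{prop:D_6} forces $R(x)$ to be irreducible, for otherwise the group would be $D_6$. For the converse, if $\sqrt[3]b\notin\K$ and $R(x)$ is irreducible, then Proposition \ref{prop:D_6} yields $\Gal{\F}{\K}\not\cong D_6$, and the dichotomy of the first paragraph then leaves only $\Gal{\F}{\K}\cong\mathcal S_3\times\mathcal S_3$.

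I do not expect any genuine obstacle here: all the arithmetic and group-theoretic work (identifying the transitive subgroups of $\mathcal S_6$ of orders $12$ and $36$, and analysing the root $c=\alpha\beta(\alpha+\beta)$ of $R(x)$) was carried out in the preceding results, and what remains is only to organise the two propositions into a single biconditional via De Morgan's laws. The one point to state carefully is that $D_6$ and $\mathcal S_3\times\mathcal S_3$ are genuinely the only options in this regime, which is precisely what makes ``not $D_6$'' interchangeable with ``equal to $\mathcal S_3\times\mathcal S_3$''.
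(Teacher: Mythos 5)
Your argument is correct and is exactly the paper's proof: the paper disposes of this corollary in one line, citing Lemma \ref{lemma:both} and Propositions \ref{prop:cube} and \ref{prop:D_6}, and your write-up simply makes explicit the dichotomy $[\F:\K]\in\{12,36\}$ (hence $D_6$ versus $\mathcal S_3\times\mathcal S_3$) from Section \ref{section:preliminaries} and the De Morgan bookkeeping the paper leaves implicit. No gaps; your observation that Lemma \ref{lemma:both} only serves to show the two $D_6$ alternatives are mutually exclusive, rather than being logically essential, is accurate.
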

\begin{proof}
 It follows immediately from Lemma \ref{lemma:both}, Propositions \ref{prop:cube} and \ref{prop:D_6}.
\end{proof}
We now give some examples of polynomials for which we can compute the Galois group.
\begin{cor}
 \label{cor:1}
 We consider infinite families of $a,b\in\Z$ when $f(x)=x^6+ax^3+b\in\Q[x]$ is irreducible over $\Q$ and $\Delta\neq-3n^2$. Hence, in the following cases one has $\Gal{\F}{\Q}\cong\mathcal S_3\times\mathcal S_3$:
 \begin{enumerate}
  \item $b=2$ and $a\equiv1\emph{ mod }10$;
  \item $b=27(100n+23)^3+1$ with $n\leq-1$ and $a\equiv25\emph{ mod }30$.
 \end{enumerate}
 While in the following ones $\Gal{\F}{\Q}\cong D_6$:
 \begin{enumerate}
  \setcounter{enumi}{2}
  \item $b\equiv10\emph{ mod }12$ and $a=0$;
  \item $b=-(5n+1)^3$ with $n>0$ and $a\equiv3\emph{ mod 5}$.
 \end{enumerate}
\end{cor}
\begin{proof}
 Cases 1 and 2 follows applying Eisenstein criterion to $R(x)$, while in Case 3 we observe that $0$ is a root of $R(x)$. Finally, for Case 4 we note that $\sqrt[3]b=-5n-1\in\Z$.
\end{proof}
From now on we suppose that $\Delta=-3n^2$ for some $n\in\K$. This implies $\K(\zeta_3)\subset\K(\alpha)$ for Lemma \ref{lemma:zeta}.
From what we said in the previous section, see Figure \ref{Splitting}, we have that
\[[\F:\K]=\left\{
 \begin{aligned}
  &18\hspace{1cm}\Longrightarrow\hspace{1cm}\Gal{\F}{\K}\cong      
   C_3\times\mathcal S_3 \\
  &6\hspace{1.2cm}\Longrightarrow\hspace{1cm}\Gal{\F}{\K}\cong\mathcal S_3\text{ or }C_6\:.
 \end{aligned}
\right. \] 
All these three cases can happen, here we give an example for each group over the rational numbers:
\begin{itemize}
    \item $C_3\times\mathcal S_3$: \hspace{1cm}\:\:$x^6+3x^3+3$;
    \item $\mathcal S_3$: \hspace{2cm}\:$x^6+3$;
    \item $C_6$: \hspace{2cm}$x^6+x^3+1$.
\end{itemize}
\begin{prop}
 \label{prop:C_6}
 Suppose that $\K$ is a field without all the cubic roots of unity and such that $\cchar(\K)\neq2$. Moreover, we assume $\Delta=-3n^2$.
 
 Then one has $\Gal{\F}{\K}\cong C_6$ if and only if $\sqrt[3]b\in\K$.
\end{prop}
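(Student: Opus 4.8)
The plan is to build on the structural information already assembled in this subsection. Under the standing hypotheses ($\cchar(\K)\neq 2$, $\zeta_3\notin\K$, which also forces $\cchar(\K)\neq3$, and $\Delta=-3n^2$) the extension $\F/\K$ is Galois and, by the analysis following Lemma \ref{lemma:zeta}, $[\F:\K]\in\{6,18\}$ with $\Gal{\F}{\K}$ isomorphic to $\mathcal S_3$, $C_6$ or $C_3\times\mathcal S_3$; in particular only the order-$6$ case can produce $C_6$. I would also record two facts that drive everything: since $\Delta=-3n^2$ we have $\K(\zeta_3)=\K(\sqrt\Delta)$, and since $\sqrt\Delta=2\alpha^3+a$ this field sits inside $\K(\alpha^3)$. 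Consequently the image of $\zeta_3$ under any $\sigma\in\Gal{\F}{\K}$ is governed entirely by the action of $\sigma$ on $\alpha^3$.

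For the implication $\sqrt[3]b\in\K\Rightarrow\Gal{\F}{\K}\cong C_6$ I would argue as follows. If $\sqrt[3]b\in\K$ then $\beta=\sqrt[3]b/\alpha\in\K(\alpha)$, so $\F=\K(\alpha)$ and $[\F:\K]=6$; hence $\Gal{\F}{\K}$ is $\mathcal S_3$ or $C_6$ and it suffices to prove it is abelian. Because $\F=\K(\alpha)$ has degree $6$, every root of $f(x)$ equals $\sigma(\alpha)$ for a unique $\sigma$, so I may define $r$ by $r(\alpha)=\alpha\zeta_3$ and $s$ by $s(\alpha)=\beta$. From $r(\alpha^3)=\alpha^3$ and $s(\alpha^3)=\beta^3$, together with $\sqrt\Delta=2\alpha^3+a$, one reads off $r(\sqrt\Delta)=\sqrt\Delta$ and $s(\sqrt\Delta)=-\sqrt\Delta$, hence $r(\zeta_3)=\zeta_3$ and $s(\zeta_3)=\zeta_3^2$; using $\alpha\beta=\sqrt[3]b\in\K$ one also gets $r(\beta)=\beta\zeta_3^2$ and $s(\beta)=\alpha$, so $r$ has order $3$ and $s$ order $2$. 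The decisive step is the commutation check: both $rs$ and $sr$ send $\alpha$ to $\beta\zeta_3^2$, and since they agree on the primitive element $\alpha$ they coincide. Thus $\Gal{\F}{\K}=\langle r,s\rangle$ is abelian of order $6$, i.e.\ $C_6$.

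For the converse $\Gal{\F}{\K}\cong C_6\Rightarrow\sqrt[3]b\in\K$ I would prove the contrapositive. If $\Gal{\F}{\K}\cong C_6$ then $[\F:\K]=6$, so $\sqrt[3]b\in\K(\alpha)=\F$ and $[\K(\sqrt[3]b):\K]\in\{1,3\}$. Suppose $\sqrt[3]b\notin\K$, so this degree is $3$. Every subgroup of $C_6$ is normal and there is a unique subextension of each degree, so the degree-$3$ field $\K(\sqrt[3]b)$ must be the normal, hence Galois, cubic subextension. But a normal $\K(\sqrt[3]b)$ contains the remaining roots $\sqrt[3]b\,\zeta_3$ and $\sqrt[3]b\,\zeta_3^2$ of $x^3-b$, forcing $\zeta_3\in\K(\sqrt[3]b)$ and thus $[\K(\zeta_3):\K]=2$ dividing $[\K(\sqrt[3]b):\K]=3$, a contradiction. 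Hence $\sqrt[3]b\in\K$, and the equivalence follows.

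The main obstacle is the commutation computation in the second paragraph: the whole dichotomy between $\mathcal S_3$ and $C_6$ hinges on showing $rs=sr$, and this in turn depends on correctly pinning down $\sigma(\zeta_3)$ from $\sigma(\alpha^3)$ via $\K(\zeta_3)=\K(\sqrt\Delta)\subseteq\K(\alpha^3)$. Everything else is degree bookkeeping together with the standard fact that a non-normal cubic subextension cannot sit inside a cyclic Galois group.
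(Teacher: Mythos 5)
Your proposal is correct, and your forward implication takes a genuinely different route from the paper's. For $\sqrt[3]b\in\K\Rightarrow C_6$, the paper never touches explicit automorphisms: it invokes Lemma \ref{lemma:both} to see that $R(x)$ is irreducible, computes $\text{disc}(R(x))=-27b^2\Delta=81b^2n^2$, a square in $\K$, concludes that the splitting field $\M=\K(\alpha+\beta)$ of $R(x)$ is a \emph{normal} cubic subextension of $\F$, and then rules out $\mathcal S_3$ because $\mathcal S_3$ has no normal subgroup of order two. You instead exhibit generators $r,s$ of the order-$6$ group and verify $rs=sr$ on the primitive element $\alpha$, pinning down $\sigma(\zeta_3)$ through $\K(\zeta_3)=\K(\sqrt\Delta)\subseteq\K(\alpha^3)$ --- the same mechanism that drives the proof of Lemma \ref{lemma:zeta}. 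Your computation is self-contained (no resolvent cubic and no appeal to Lemma \ref{lemma:both} in this direction) and produces the explicit order-$6$ generator $rs$, in the spirit of the automorphism tables the paper compiles in the $D_6$ and $\mathcal S_3$ cases; the paper's argument is shorter and, as a by-product, identifies $\K(\alpha+\beta)$ as the normal cubic subfield, which is reused verbatim in the subfield lists of Section \ref{section:subfields}. Your converse is essentially the paper's argument in different clothing: you use that $C_6$ admits a unique, necessarily normal, cubic subextension and that normality of $\K(\sqrt[3]b)$ would force $\zeta_3\in\K(\sqrt[3]b)$, against $[\K(\zeta_3):\K]=2$; the paper equivalently observes that $\K(\sqrt[3]b)$ and $\K(\sqrt[3]b\,\zeta_3)$ are two distinct cubic subfields while $C_6$ has only one subgroup of order two. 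Two small points you should make explicit, though both are immediate: $\langle r,s\rangle$ is all of $\Gal{\F}{\K}$ since its order is divisible by both $2$ and $3$ in a group of order $6$, and the normalization $\alpha\beta=\sqrt[3]b\in\K$ is legitimate because $\beta$ may be replaced by $\beta\zeta_3^{\pm1}$, exactly as the paper's convention on the choice of $\beta$ allows.
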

\begin{proof}
 We start with the if implication. Suppose $\sqrt[3]b=d\in\K$. This implies that $d(\alpha+\beta)$ is a root of $R(x)=x^3-3bx+ab$, for the right choice of $\alpha$ and $\beta$. We have that
 \[\text{disc}(R(x))=108b^3-27a^2b^2=-27b^2\Delta\] and then
 \begin{equation}
 \label{R}
 \sqrt{\text{disc}(R(x))}=3b\cdot i\sqrt3\cdot\sqrt\Delta=-9bn\in\K\:.
 \end{equation}
 From this and the fact that $R(x)$ is irreducible for Lemma \ref{lemma:both} we obtain that $\Gal{\M}{\K}\cong C_3$, where $\M$ is the splitting field of $R(x)$, and $\M=\K(\alpha+\beta)$ is a normal extension of $\K$ of degree $3$.
 
 Since $\sqrt[3]b\in\K$ we know that $[\F:\K]=6$ and if the Galois group is not $C_6$ then it should be $\mathcal S_3$, but $\mathcal S_3$ does not have a normal subgroup of order two.
 
 Conversely, if $\sqrt[3]b\notin\K$ then $\K(\sqrt[3]b)$ and $\K(\sqrt[3]b\:\zeta_3)$ are two distinct cubic extensions of $\K$, but the group $C_6$ has only one subgroup of order two.
\end{proof}
We now prove a similar result for the case when the Galois group is $\mathcal S_3$.
\begin{prop}
 \label{prop:S_3}
 Under the same conditions in Proposition \ref{prop:C_6} we have that $\Gal{\F}{\K}\cong\mathcal S_3$ if and only if $R(x)=x^3-3bx+ab$ is reducible over $\K$. 
\end{prop}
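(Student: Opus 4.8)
The plan is to mirror the proof of Proposition \ref{prop:D_6}, reading the six automorphisms off Figure \ref{S_3} in place of Figure \ref{D_6} and using the degree $18$ in place of $36$.

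For the only if implication I would assume $\Gal{\F}{\K}\cong\mathcal S_3$, so that $[\F:\K]=6$ and $\F=\K(\alpha)$. From Figure \ref{S_3} I can read the action of each of the six maps on $\alpha$, $\beta$ and $\zeta_3$, and a direct check shows that $c=\alpha\beta(\alpha+\beta)$ is fixed by all of them: on the three order-three elements the factor $\sqrt[3]b=\alpha\beta$ acquires a power $\zeta_3^{\pm1}$ that is cancelled by the power of $\zeta_3$ coming from $\alpha+\beta$, and on the three symmetries $\alpha$ and $\beta$ are interchanged up to a common power of $\zeta_3$, so both $\alpha\beta$ and $\alpha+\beta$ are preserved. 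Hence $c\in\K$. Since $c$ is one of the three roots $\alpha\beta(\alpha+\beta)$, $\alpha\beta(\alpha\zeta_3+\beta\zeta_3^2)$, $\alpha\beta(\alpha\zeta_3^2+\beta\zeta_3)$ of $R(x)$ exhibited in the proof of Lemma \ref{lemma:both}, the polynomial $R(x)$ has a root in $\K$ and is reducible.

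For the converse I would argue as in Proposition \ref{prop:D_6}. If $R(x)$ is reducible then Lemma \ref{lemma:both} forces $\sqrt[3]b\notin\K$, so Proposition \ref{prop:C_6} rules out $C_6$; by the trichotomy recorded before the statement the group is then either $\mathcal S_3$ (degree $6$) or $C_3\times\mathcal S_3$ (degree $18$), and it is enough to exclude the latter. Choosing the cube roots $\alpha,\beta$ so that $\alpha\beta(\alpha+\beta)=c\in\K$ — possible because $R(x)$ now has a root in $\K$ — gives $\alpha+\beta=\frac{c}{b}\sqrt[3]{b^2}$, and substituting $\beta=-\frac1b\alpha^5\sqrt[3]b-\frac ab\alpha^2\sqrt[3]b$ yields the nontrivial $\K$-linear relation \eqref{combination} among the eighteen elements of $\mathcal C=\{\alpha^i\sqrt[3]{b^j}\}$. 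Were $\Gal{\F}{\K}\cong C_3\times\mathcal S_3$, then $[\K(\sqrt[3]b,\alpha):\K]=18$ and $\mathcal C$ would be a $\K$-basis of $\F$, contradicting \eqref{combination}. Thus $[\F:\K]=6$ and, since $\sqrt[3]b\notin\K$, the group is $\mathcal S_3$.

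I do not anticipate a real obstacle, the computation being structurally identical to Proposition \ref{prop:D_6}; the only point requiring care is the bookkeeping of powers of $\zeta_3$ in the only if direction, where one must confirm that $c=\alpha\beta(\alpha+\beta)$ is invariant under all six automorphisms and not merely under the cyclic subgroup. It is also worth noting that, unlike in Proposition \ref{prop:D_6}, here $\text{disc}(R(x))=-27\Delta$ is a square because $\Delta=-3n^2$, so reducibility of $R(x)$ means it splits completely over $\K$; this does not affect the choice of $c$, since a single root in $\K$ already suffices.
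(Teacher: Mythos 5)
Your proposal is correct and follows essentially the same route as the paper's own proof: for the only-if direction you verify, exactly as the paper does, that $\alpha\beta(\alpha+\beta)=\sqrt[3]b\,(\alpha+\beta)$ is fixed by all six automorphisms of Figure \ref{S_3} (the paper first notes $\sqrt[3]b\notin\K$ via Proposition \ref{prop:C_6}, which you should state explicitly since the table was derived under that hypothesis), and for the converse you use the same non-trivial $\K$-linear relation \eqref{combination} in the basis $\mathcal D$ of $\K(\sqrt[3]b\:,\alpha)$ to exclude degree $18$, then eliminate $C_6$ via Proposition \ref{prop:C_6} and Lemma \ref{lemma:both}. Your added remarks, that $\text{disc}(R(x))=81n^2$ is a square here so reducibility means $R(x)$ splits completely, and that a single root in $\K$ suffices to choose $\alpha,\beta$ with $\alpha\beta(\alpha+\beta)=c\in\K$, are accurate but inessential.
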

\begin{proof}
 As before, let us begin with the if implication. Say $\alpha\beta(\alpha+\beta)=c\in\K$; then one has 
 \[b\alpha-\alpha^5\sqrt[3]b-a\alpha^2\sqrt[3]b-c\sqrt[3]{b^2}=0\]
 and, like in the proof of Proposition \ref{prop:D_6}, this is a non-trivial $\K$-linear combination between elements of the basis
 \[\mathcal D=\left\{\alpha^i\sqrt[3]{b^j}\:|\:i=0,...,5;\:j=0,1,2\right\}\] of $\K(\sqrt[3]b\:,\alpha)$.
 Hence, the Galois group cannot have order $18$.
 Clearly, we also have that $\Gal{\F}{\K}$ is not isomorphic to $C_6$ for Proposition \ref{prop:C_6} and we conclude using Lemma \ref{lemma:both}.
 
 Conversely, suppose that $\Gal{\F}{\K}\cong\mathcal S_3$. Then we know that $\sqrt[3]b\notin\K$ for Proposition \ref{prop:C_6} and $\F=\K(\sqrt[3]b\:,\zeta_3)$. If we assume that $R(x)$ is irreducible then we can prove that its splitting field is a normal extension of $\K$ of degree 3, using Equation \eqref{R}.
 This is a contradiction because $\mathcal S_3$ doesn not have normal subgroups of order two.
\end{proof}
We conclude with the following corollary.
\begin{cor}
 Under the same conditions in Proposition \ref{prop:C_6} we have that $\Gal{\F}{\K}\cong C_3\times\mathcal S_3$ if and only if $\sqrt[3]b\notin\K$ and $R(x)$ is irreducible over $\K$.
\end{cor}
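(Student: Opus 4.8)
The plan is to argue by elimination, in exactly the same spirit as the corollary following Proposition \ref{prop:D_6}. Under the hypotheses inherited from Proposition \ref{prop:C_6} — namely $\cchar(\K)\neq2$, $\zeta_3\notin\K$ and $\Delta=-3n^2$ — we have already recorded that $\K(\zeta_3)\subset\K(\alpha)$ by Lemma \ref{lemma:zeta}, so that $[\F:\K]$ is either $6$ or $18$; consequently $\Gal{\F}{\K}$ is isomorphic to exactly one of the three groups $C_6$, $\mathcal S_3$ and $C_3\times\mathcal S_3$. This trichotomy is the only structural input I need beyond the two preceding propositions, each of which is a biconditional.

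For the forward implication I would suppose $\Gal{\F}{\K}\cong C_3\times\mathcal S_3$. Then the group is not $C_6$, so Proposition \ref{prop:C_6} forces $\sqrt[3]b\notin\K$; and it is not $\mathcal S_3$, so Proposition \ref{prop:S_3} forces $R(x)$ to be irreducible over $\K$. For the converse I would assume $\sqrt[3]b\notin\K$ and $R(x)$ irreducible: the first condition excludes $C_6$ by Proposition \ref{prop:C_6}, the second excludes $\mathcal S_3$ by Proposition \ref{prop:S_3}, and since the only remaining possibility in the trichotomy is $C_3\times\mathcal S_3$, this must be the Galois group.

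Since the statement is a pure logical consequence of the two biconditionals already established, I do not expect a genuine obstacle; the proof is essentially a one-line deduction. The only point worth verifying is that the three cases are truly exhaustive and pairwise incompatible. Exhaustiveness is the degree computation recalled above, while incompatibility of the arithmetic conditions — that we cannot simultaneously have $\sqrt[3]b\in\K$ (which would give $C_6$) and $R(x)$ reducible (which would give $\mathcal S_3$) — is precisely the content of Lemma \ref{lemma:both}, so the three characterizations are mutually consistent and the elimination is valid.
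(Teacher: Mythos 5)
Your proposal is correct and matches the paper's own argument: the paper proves this corollary in one line by citing Lemma \ref{lemma:both} together with Propositions \ref{prop:C_6} and \ref{prop:S_3}, relying on exactly the trichotomy $\Gal{\F}{\K}\cong C_6,\ \mathcal S_3$ or $C_3\times\mathcal S_3$ that was established just before those propositions via Lemma \ref{lemma:zeta} and Figure \ref{Splitting}. You have simply spelled out the elimination argument that the paper leaves implicit, including the (correct) observation that Lemma \ref{lemma:both} guarantees the mutual consistency of the two characterizations.
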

\begin{proof}
 It follows immediately from Lemma \ref{lemma:both}, Propositions \ref{prop:C_6} and \ref{prop:S_3}.
\end{proof}
Furthermore, we produce some families of polynomials whose Galois groups can be determined from the results in this subsection.
\begin{cor}
 \label{cor:2}
 Suppose that $\K=\Q$. Then we have that
 \begin{enumerate}
  \item if $f(x)=x^6+3(3n+1)^2$ then   
        $\Gal{\F}{\Q}\cong\mathcal S_3$;
  \item if $f(x)=x^6+(5n+1)^3x^3+(5n+1)^6$ then $\Gal{\F}{\Q}\cong C_6$;       
  \item if $f(x)=x^6+px^3+p^2$ where $p\equiv1\emph{ mod }5$ is prime then $\Gal{\F}{\Q}\cong C_3\times\mathcal S_3$.
 \end{enumerate}
\end{cor}
\begin{proof}
 In Case 1 the polynomial $R(x)$ is clearly reducible and $3$ is not a square in $\Q$. In Case 2 $b$ is a cube and in Case 3 we see that $R(x)=x^3-3p^2x+p^3$. 
 This polynomial is irreducible because the only possible rational solutions are $\{1,p,p^2,p^3\}$, but it is easy to check that none of these actually makes $R(x)$ vanish.
\end{proof}

\subsection{Proof of Theorem \ref{teo:main}: cubic roots of unity in \texorpdfstring{$\K$}{K}}
In this subsection we suppose that $\K$ is a field with characteristic different from two, but such that all the three cubic roots of unity belong to $\K$. This means that either $\cchar(\K)=3$ or $\zeta_3\in\K$.

Note that if $f(x)=x^6+ax^3+b$ is irreducible over $\K$ then  $\Delta\neq-3n^2$ for every $\in\K$.
\begin{prop}
 \label{prop:zeta_in}
 Suppose that $\zeta_3\in\K$ and $f(x)=x^6+ax^3+b$ is irreducible over $\K$, a field such that $\cchar(\K)\neq2$. Denote with $\F$ the splitting field of $f(x)$ as before. Then we have that
 \begin{itemize}
     \item $\Gal{\F}{\K}\cong\mathcal S_3$ if and only if  
           $\sqrt[3]b\in\K$ and $R(x)=x^3-3bx+ab$ is irreducible over $\K$;
     \item $\Gal{\F}{\K}\cong C_6$ if and only if $\sqrt[3]b\notin\K$ and $R(x)$ is reducible over $\K$.
 \end{itemize}
\end{prop}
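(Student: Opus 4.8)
The plan is to prove the two equivalences at once by establishing a trichotomy for the pair of conditions $(\sqrt[3]b\in\K\text{ or not},\ R\text{ reducible or not})$, and then reading off the group in each case. Two preliminary facts would be recorded first. Since $\zeta_3\in\K$ we have $i\sqrt3=2\zeta_3+1\in\K$, and irreducibility of $f$ forces $\sqrt\Delta\notin\K$ (otherwise $\alpha^3,\beta^3\in\K$ and $f=(x^3-\alpha^3)(x^3-\beta^3)$ would split). From the identity $\text{disc}(R)=-27b^2\Delta$ we then get $\sqrt{\text{disc}(R)}=b\sqrt{-27\Delta}=3b(2\zeta_3+1)\sqrt\Delta$, which lies in $\K$ if and only if $\sqrt\Delta$ does; hence $\text{disc}(R)$ is \emph{not} a square in $\K$. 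Consequently $R$ is either irreducible with splitting field having Galois group $\mathcal S_3$, or it has exactly one root in $\K$ (it cannot split completely, as that would make its discriminant a square). The second observation is that, whenever $\sqrt[3]b\notin\K$, the field $\K(\sqrt[3]b)$ is a \emph{normal} cubic subfield of $\F$: because $\zeta_3\in\K$ it contains $\sqrt[3]b,\zeta_3\sqrt[3]b,\zeta_3^2\sqrt[3]b$ and so is the splitting field of $x^3-b$ over $\K$.

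Recalling that by Lemma \ref{lemma:both} the combination $\sqrt[3]b\in\K$ with $R$ reducible never occurs, and that $[\F:\K]\in\{6,18\}$ with $\Gal{\F}{\K}$ one of $\mathcal S_3,C_6,C_3\times\mathcal S_3$, I would treat the three admissible cases. If $\sqrt[3]b\in\K$ and $R$ is irreducible, then $\beta=\sqrt[3]b/\alpha\in\K(\alpha)$, so $\F=\K(\alpha)$ has degree $6$; the splitting field $\M$ of $R$ lies in $\F$ and satisfies $[\M:\K]=6$ with $\Gal{\M}{\K}\cong\mathcal S_3$, whence $\F=\M$ and $\Gal{\F}{\K}\cong\mathcal S_3$. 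If $\sqrt[3]b\notin\K$ and $R$ is reducible, I would choose $\alpha,\beta$ so that the rational root is $r=\alpha\beta(\alpha+\beta)\in\K$; then $\beta$ is a root of the quadratic $\alpha x^2+\alpha^2x-r\in\K(\alpha)[x]$, while it also satisfies $x^3-\beta^3$ with $\beta^3\in\K(\alpha)$ and $\zeta_3\in\K(\alpha)$, so its degree over $\K(\alpha)$ lies in $\{1,3\}$ and is at most $2$, forcing $\beta\in\K(\alpha)$ and $[\F:\K]=6$. In this case $\Gal{\F}{\K}$ has order six and, by the normal subfield $\K(\sqrt[3]b)$, a normal subgroup of order two; the only group of order six with that property is $C_6$, so $\Gal{\F}{\K}\cong C_6$.

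It remains to dispose of the last case, $\sqrt[3]b\notin\K$ with $R$ irreducible, in order to know the previous two are disjoint. Were $[\F:\K]=6$, the normal cubic subfield $\K(\sqrt[3]b)$ would again force $\Gal{\F}{\K}\cong C_6$, hence abelian; but then its quotient $\Gal{\M}{\K}\cong\mathcal S_3$ would be abelian, a contradiction. Thus $[\F:\K]=18$ and $\Gal{\F}{\K}\cong C_3\times\mathcal S_3$, the unique transitive subgroup of $\mathcal S_6$ of order $18$. With the trichotomy complete, the three cases are exhaustive and yield pairwise non-isomorphic groups, so $\Gal{\F}{\K}\cong\mathcal S_3$ holds exactly when $\sqrt[3]b\in\K$ and $R$ is irreducible, and $\Gal{\F}{\K}\cong C_6$ exactly when $\sqrt[3]b\notin\K$ and $R$ is reducible, which is the assertion.

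The step I expect to be most delicate is the separation of $C_6$ from $\mathcal S_3$. The decisive input, and the genuine difference from the regime $\zeta_3\notin\K$ treated earlier, is that here $\K(\sqrt[3]b)$ is automatically normal, so its mere presence forces commutativity in degree six; the non-square discriminant of $R$ is then exactly what prevents an irreducible $R$ from coexisting with an abelian Galois group, pinning down which of the two groups occurs.
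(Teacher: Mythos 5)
Your proof is correct, and its two ``if'' directions track the paper's own proof closely: the $\mathcal S_3$ case by identifying $\F$ with the splitting field $\M$ of $R(x)$, whose discriminant $-27b^2\Delta$ you rightly show is a non-square since $\sqrt\Delta\notin\K$ while $2\zeta_3+1$ is a square root of $-3$ in $\K$; and the $C_6$ case via normality of $\K(\sqrt[3]b)$ together with the fact that $\mathcal S_3$ has no normal subgroup of order two. You diverge from the paper in two places, both defensibly. First, to rule out $[\F:\K]=18$ when $R(x)$ is reducible, the paper re-runs the argument of Propositions \ref{prop:D_6} and \ref{prop:S_3}: the relation $b\alpha-\alpha^5\sqrt[3]b-a\alpha^2\sqrt[3]b-c\sqrt[3]{b^2}=0$ would be a non-trivial $\K$-linear dependence among the elements $\alpha^i\sqrt[3]{b^j}$, which in degree $18$ would form a basis of $\K(\sqrt[3]b,\alpha)$. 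You instead observe that $\beta$ satisfies the quadratic $\alpha x^2+\alpha^2x-r\in\K(\alpha)[x]$ while $[\K(\alpha)(\beta):\K(\alpha)]\in\{1,3\}$ because $\beta^3=(-a-\sqrt\Delta)/2\in\K(\alpha)$ and $\zeta_3\in\K$; this forces $\beta\in\K(\alpha)$ outright and is more elementary and self-contained than the basis contradiction. Second, where the paper proves the two converses directly (using again the non-square discriminant of $R(x)$ and the normal cubic subfield $\K(\sqrt[3]b)$), you complete a trichotomy by settling the remaining admissible case $\sqrt[3]b\notin\K$ with $R(x)$ irreducible via the abelian-quotient contradiction — and note that for the equivalences themselves you only need $[\F:\K]=18\neq6$ there, not the Atlas identification of the order-$18$ transitive subgroup — then read off both biconditionals by exclusion, Lemma \ref{lemma:both} (which you correctly invoke) eliminating the fourth combination. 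The logical content matches the paper's; your organization proves the $C_3\times\mathcal S_3$ criterion en route, which the paper obtains only afterwards as a corollary of this proposition together with Lemma \ref{lemma:both}.
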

\begin{proof}
 Let us prove the if implications first. Since $\sqrt[3]b\:,\zeta_3\in\K$ one has $[\F:\K]=6$ and $\F=\K(\sqrt\Delta,\alpha+\beta)$ because $\sqrt[3]b\:(\alpha+\beta)$ is a root of $R(x)$ which is irreducible.
 Moreover, the field $\F$ is the splitting field of $R(x)$ because
 $\cchar(\K)\neq3$ and $\sqrt{\text{disc}(R(x))}=m\sqrt\Delta$ for some non-zero $m\in\K$. Then it follows from a standard result in Galois theory, see \cite{Cox}, that $\Gal{\F}{\K}$ is isomorphic to $\mathcal S_3$. Note that here we use that $\cchar(\K)\neq2$.
 
 Now suppose that $\sqrt[3]b\notin\K$ and $R(x)$ is reducible. The fact that $[\F:\K]\neq18$ follows in the same way as in the proof of Propositions \ref{prop:D_6} and \ref{prop:S_3}. Then we have that $\F=\K(\sqrt\Delta,\sqrt[3]b)$, but $\K(\sqrt[3]b)$ is a normal extension of $\K$ because it is the splitting field of $x^3-b$. This implies that $\Gal{\F}{\K}\cong C_6$.
 
 Conversely, suppose that $\Gal{\F}{\K}$ is isomorphic to $C_6$. We immediately obtain that $\sqrt[3]b\notin\K$ for what we said before and Lemma \ref{lemma:both}. If $R(x)$ is irreducible then, since $\text{disc}(R(x))$ is not a square in $\K$ and $\cchar(\K)\neq2$, the Galois group should be $\mathcal S_3$, but this is a contradiction.
 
 Finally, we say that $\Gal{\F}{\K}\cong\mathcal S_3$. Then $R(x)$ is irreducible over $\K$, again for what we said before and Lemma \ref{lemma:both}, and $\K(\sqrt[3]b)$ is a normal extension of degree three of $\K$ if we suppose that $\sqrt[3]b$ is not in $\K$. This completes the proof because $\mathcal S_3$ does not have a normal subgroup of order two.
\end{proof}
From the previous proposition we also obtain the following result.
\begin{cor}
 With the hypothesis of Proposition \ref{prop:zeta_in} we have that $\Gal{\F}{\K}\cong C_3\times\mathcal S_3$ if and only if $\sqrt[3]b\notin\K$ and $R(x)$ is irreducible over $\K$.
\end{cor}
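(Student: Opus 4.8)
The plan is to prove this by elimination, exactly in the spirit of the two analogous corollaries in the previous subsection. The crucial structural fact, already recorded after Lemma \ref{lemma:zeta}, is that when $\zeta_3\in\K$ the right-hand diagram in Figure \ref{Splitting} forces $[\F:\K]\in\{6,18\}$, so that $\Gal{\F}{\K}$ is isomorphic to exactly one of the three groups $\mathcal S_3$, $C_6$, or $C_3\times\mathcal S_3$. It therefore suffices to check that the pair of conditions ``$\sqrt[3]b\notin\K$ and $R(x)$ irreducible'' is precisely the combination not already assigned to one of the other two groups by Proposition \ref{prop:zeta_in}.

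For the if implication I would assume $\sqrt[3]b\notin\K$ and that $R(x)$ is irreducible over $\K$. Proposition \ref{prop:zeta_in} shows that $\Gal{\F}{\K}\cong\mathcal S_3$ would require $\sqrt[3]b\in\K$, so that case is excluded; similarly $\Gal{\F}{\K}\cong C_6$ would require $R(x)$ to be reducible, which is also excluded. Since these two groups together with $C_3\times\mathcal S_3$ exhaust all the possibilities, the only remaining option is $\Gal{\F}{\K}\cong C_3\times\mathcal S_3$.

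For the only if implication I would assume $\Gal{\F}{\K}\cong C_3\times\mathcal S_3$, so the group is neither $\mathcal S_3$ nor $C_6$. By Proposition \ref{prop:zeta_in} this rules out both the combination ``$\sqrt[3]b\in\K$ and $R(x)$ irreducible'' and the combination ``$\sqrt[3]b\notin\K$ and $R(x)$ reducible''. Finally, Lemma \ref{lemma:both} eliminates the remaining unwanted pair ``$\sqrt[3]b\in\K$ and $R(x)$ reducible''. The only surviving combination is $\sqrt[3]b\notin\K$ and $R(x)$ irreducible, as claimed.

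I do not anticipate any genuine obstacle, since the entire argument is a process of elimination among the four a priori logical cases, collapsed to three group-theoretic outcomes by the degree count. The only point requiring care is confirming that the list $\{\mathcal S_3,\,C_6,\,C_3\times\mathcal S_3\}$ is genuinely exhaustive in the $\zeta_3\in\K$ setting; but this is exactly what the bound $[\F:\K]\in\{6,18\}$ together with the classification of transitive subgroups of $\mathcal S_6$ cited in \cite{Atlas} supplies, so the corollary follows immediately from Lemma \ref{lemma:both} and Proposition \ref{prop:zeta_in}.
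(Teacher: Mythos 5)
Your proof is correct and takes essentially the same approach as the paper, whose entire proof reads ``It follows immediately from Lemma \ref{lemma:both} and Proposition \ref{prop:zeta_in}'': the same elimination among the four coefficient conditions, with exhaustiveness of the list $\{\mathcal S_3, C_6, C_3\times\mathcal S_3\}$ supplied by the degree count $[\F:\K]\in\{6,18\}$ established after Lemma \ref{lemma:zeta}. Your write-up simply makes explicit the case analysis the paper leaves implicit.
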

\begin{proof}
 It follows immediately from Lemma \ref{lemma:both} and Proposition \ref{prop:zeta_in}.
\end{proof}
The following corollary shows another family of polynomials with Galois group isomorphic to $C_6$.
\begin{cor}
 If $f(x)=x^6-p$ where $p$ is a prime integer then $\emph{Gal}\left(\faktor{\F}{\Q(\zeta_3)}\right)\cong C_6$.
\end{cor}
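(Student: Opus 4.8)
The plan is to reduce the statement to the case $\zeta_3\in\K$ of Proposition \ref{prop:zeta_in}, taken with $\K=\Q(\zeta_3)$, and to verify its three hypotheses for the specialization $a=0$, $b=-p$. In our situation $\Delta=a^2-4b=4p$ and
\[R(x)=x^3-3bx+ab=x^3+3px=x(x^2+3p)\:,\]
so $R(x)$ is immediately reducible over $\Q(\zeta_3)$, having $0$ as a root. This disposes of the condition on $R(x)$.

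Next I would check that none of the cube roots of $b=-p$ lies in $\Q(\zeta_3)$. Each such root is a zero of $x^3+p$, which is irreducible over $\Q$ by the Eisenstein criterion at $p$; hence every cube root of $-p$ generates a degree-three extension of $\Q$. Since $[\Q(\zeta_3):\Q]=2$ and $\gcd(2,3)=1$, no element of degree three over $\Q$ can lie in $\Q(\zeta_3)$, and therefore $\sqrt[3]b\notin\Q(\zeta_3)$.

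The only slightly delicate point, which I expect to be the main obstacle, is the irreducibility of $f(x)=x^6-p$ over $\Q(\zeta_3)$: over $\Q$ it is again Eisenstein at $p$, but one must pass to the quadratic extension. I would argue by degrees. Let $\mu=\sqrt[6]{p}$ be the real positive sixth root, so that $[\Q(\mu):\Q]=6$. As $\Q(\mu)\subset\R$ while $\zeta_3\notin\R$, we have $\zeta_3\notin\Q(\mu)$, whence $[\Q(\mu,\zeta_3):\Q]=12$ and consequently $[\Q(\mu,\zeta_3):\Q(\zeta_3)]=6$. Thus the minimal polynomial of $\mu$ over $\Q(\zeta_3)$ has degree $6$; being monic of degree $6$ and vanishing at $\mu$, it must equal $x^6-p$, so $f$ is irreducible over $\Q(\zeta_3)$.

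Having established that $\zeta_3\in\Q(\zeta_3)$, that $f$ is irreducible and separable (as $\cchar(\Q(\zeta_3))=0$), that $\sqrt[3]b\notin\Q(\zeta_3)$, and that $R(x)$ is reducible over $\Q(\zeta_3)$, the second bullet of Proposition \ref{prop:zeta_in} applies directly and yields $\Gal{\F}{\Q(\zeta_3)}\cong C_6$.
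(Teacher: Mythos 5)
Your proof is correct and takes essentially the same route as the paper's: both reduce to the second bullet of Proposition \ref{prop:zeta_in} with $\K=\Q(\zeta_3)$, establishing irreducibility of $x^6-p$ over $\Q(\zeta_3)$ via Eisenstein together with the degree count $[\Q(\zeta_3,\sqrt[6]p):\Q]=12$, deducing $\sqrt[3]b\notin\Q(\zeta_3)$ from $[\Q(\zeta_3):\Q]=2$, and noting that $R(x)=x^3+3px$ is evidently reducible. The only difference is that you spell out the real-embedding argument behind the degree-$12$ claim, which the paper simply asserts.
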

\begin{proof}
 We note that $f(x)$ is always irreducible over $\Q(\zeta_3)$. In fact, it is irreducible over $\Q$ for the Eisenstein criterion and one has $[\Q(\zeta_3,\sqrt[6]p):\Q]=12$. Moreover, one has $\sqrt[3]p\notin\Q(\zeta_3)$ since $[\Q(\zeta_3):\Q]=2$, while $R(x)=x^3+3px$ is clearly reducible.
\end{proof}
There is only one case which is left to study: when $\K$ is a field with characteristic equal to three. We do this in the following proposition.
\begin{prop}
 Suppose that $\K$ is a field such that $\cchar(\K)=3$ and $f(x)=x^6+ax^3+b$ is irreducible over $\K$.
 
 Then we have that $[\F:\K]=6,18$ depending on whether $\K(\sqrt[3]a\:,\sqrt[3]b)$ is degree $3$ or $9$ over $\K$, but $f(x)$ and $\F$ are not separable and then we cannot define the Galois group. 
\end{prop}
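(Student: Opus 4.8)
The plan is to determine $\F$ explicitly and then observe that a purely inseparable phenomenon forces both conclusions, since the usual Galois machinery is unavailable here. Throughout $\cchar(\K)=3$, so the only cube root of unity is $1$ and the Frobenius $x\mapsto x^3$ is injective; hence every element of the algebraic closure has a unique cube root. As recorded in the preliminaries, the six roots of $f$ are $\alpha$ and $\beta$, each with multiplicity three, so that $f(x)=(x-\alpha)^3(x-\beta)^3$ and $\F=\K(\alpha,\beta)$. First I would extract the two identities that drive everything: since $3=0$ we have $(\alpha+\beta)^3=\alpha^3+\beta^3=-a$ and $(\alpha\beta)^3=\alpha^3\beta^3=b$, whence $\alpha+\beta=-\sqrt[3]a$ and $\alpha\beta=\sqrt[3]b$. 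In particular $\alpha$ and $\beta$ are the two roots of the quadratic $t^2+\sqrt[3]a\,t+\sqrt[3]b$ with coefficients in $E:=\K(\sqrt[3]a,\sqrt[3]b)$, and they are distinct by Lemma \ref{lemma:triplets}.

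Next I would pin down the tower $\K\subseteq E\subseteq\F$. Each of $\sqrt[3]a,\sqrt[3]b$ satisfies a polynomial of the shape $x^3-c=(x-\sqrt[3]c)^3$ over $\K$, so $E/\K$ is purely inseparable (being generated by purely inseparable elements) of degree $1$, $3$ or $9$. It cannot be $1$: if both cube roots lay in $\K$ then $\alpha+\beta,\alpha\beta\in\K$ and $\alpha$ would have degree at most two over $\K$, contradicting $[\K(\alpha):\K]=6$. Hence $[E:\K]\in\{3,9\}$. Because $\alpha+\beta,\alpha\beta\in E$ we get $\beta\in E(\alpha)$, so $\F=E(\alpha)$ with $[\F:E]\leq2$, and the crux is to show this degree is exactly $2$. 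For this I would use $\sqrt\Delta=2\alpha^3+a\in\K(\alpha)\subseteq\F$. This element is separable over $\K$ (its minimal polynomial is $x^2-\Delta$ and $\cchar(\K)\neq2$), and it does not lie in $\K$: were $\sqrt\Delta\in\K$, then $\alpha^3=(-a+\sqrt\Delta)/2\in\K$ and $\alpha$, a root of $x^3-\alpha^3=(x-\alpha)^3\in\K[x]$, would have degree at most three over $\K$, again contradicting $[\K(\alpha):\K]=6$. But $E/\K$ is purely inseparable, so the only elements of $E$ separable over $\K$ are those of $\K$; therefore $\sqrt\Delta\notin E$ and $\F\neq E$. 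Thus $[\F:E]=2$ and $[\F:\K]=2\,[E:\K]$, which equals $6$ when $[E:\K]=3$ and $18$ when $[E:\K]=9$.

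Finally, the separability statement is immediate from the structure just obtained. The factorization $f(x)=(x-\alpha)^3(x-\beta)^3$ exhibits repeated roots, so $f$ is not separable. Moreover $\F$ contains at least one of $\sqrt[3]a,\sqrt[3]b$ lying outside $\K$ (otherwise $[E:\K]=1$), and such an element is purely inseparable of degree three over $\K$; hence $\F/\K$ is not a separable extension, in particular not Galois, so no Galois group can be attached to it. I expect the only genuine obstacle to be the equality $[\F:E]=2$: everything else is bookkeeping, whereas ruling out $\F=E$ is precisely where one must invoke the separable-versus-purely-inseparable dichotomy rather than a naive multiplicativity-of-degrees count.
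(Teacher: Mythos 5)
Your proof is correct, and it follows the same skeleton as the paper's: factor $f(x)=\left(x^2+\sqrt[3]a\,x+\sqrt[3]b\right)^3$, view $\F$ as the splitting field of this quadratic over $E=\K(\sqrt[3]a,\sqrt[3]b)$, rule out $[E:\K]=1$ by irreducibility of $f(x)$, and read off the degree from the tower. The genuine difference is that you actually prove the step $[\F:E]=2$, whereas the paper simply writes $[\F:\K]=[\F:E]\cdot[E:\K]=6,18$ without addressing whether the quadratic might split over $E$ (in which case one would get $[\F:\K]=3$ or $9$ and the stated degrees would be wrong). Your argument closes exactly this gap, and with the right tool: $\sqrt\Delta=2\alpha^3+a\in\K(\alpha)\subseteq\F$ is separable over $\K$ and not in $\K$ (by the degree-six constraint on $\alpha$), while $E/\K$ is purely inseparable, so the only elements of $E$ separable over $\K$ lie in $\K$; hence $\sqrt\Delta\notin E$ and $\F\neq E$. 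Note that your exclusion of $\sqrt\Delta\in\K$ also disposes of the degenerate case $\Delta=0$, which you should invoke \emph{before} claiming $x^2-\Delta$ is the (separable) minimal polynomial of $\sqrt\Delta$ --- as written, that sentence mildly presupposes what you prove next, but the repair is just a reordering. Your treatment of inseparability also differs cosmetically from the paper's: the paper observes that $f(x)$, the minimal polynomial of $\alpha$, has repeated roots, while you exhibit a purely inseparable cube root of degree three inside $\F$; both suffice. On balance your write-up is more complete than the paper's own proof at the one point where completeness matters.
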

\begin{proof}
 Since $\cchar(\K)=3$ we have that $R(x)=x^3+ab$ and $\alpha+\beta=-\sqrt[3]a$. We observe that $f(x)$ factors as \begin{equation}
  \label{factor}
  f(x)=\left(x^2+\sqrt[3]a\:x+\sqrt[3]b\right)^3 
 \end{equation} in the algebraic closure of $\K$. Therefore, the field $\F$ coincides with the splitting field of $x^2+\sqrt[3]a\:x+\sqrt[3]b$ over $\K(\sqrt[3]a\:,\sqrt[3]b)$.
 
 We have that $\K(\sqrt[3]a)$ and $\K(\sqrt[3]b)$ are two extensions of $\K$ of degree one or three. Furthermore, if $\sqrt[3]a\:,\sqrt[3]b\in\K$ then the factor $x^2+\sqrt[3]a\:x+\sqrt[3]b$ of $f(x)$ in Equation \eqref{factor} belongs to $\K[x]$, but $f(x)$ is irreducible over $\K$. In other words, one has $[\K(\sqrt[3]a\:,\sqrt[3]b):\K]=3,9$; in fact, if the degree was 6 then $\sqrt[3]b$ should be the root of a degree 2 polynomial in $\K(\sqrt[3]a)$, and  \[[\F:\K]=[\F:\K(\sqrt[3]a\:,\sqrt[3]b)]\cdot[\K(\sqrt[3]a\:,\sqrt[3]b):\K]=6,18\:.\] 
 Since $f(x)$ is irreducible we have that it is the minimal polynomial of $\alpha$ over $\K$. The roots of $f(x)$ are $\alpha$ and $\beta$, each one with multiplicity three, which means $\alpha$ is not separable over $\K$. In particular, this implies that $\F$ is not a separable extension of $\K$ and then there is no Galois group in this case. 
\end{proof}
Note that irreducible polynomials of this kind really exist: examples are given by $x^6-u$ and $x^6+uvx^3+u$ in $\F_3(u,v)[x]$, where $u$ and $v$ are transcendental over $\F_3$. The proposition we proved before clearly implies that their splitting field $\F$ is not a separable extension.
Moreover, we can also observe that the group of automorphisms of $\F$ only has two elements, despite $[\F:\F_3(u,v)]$ being higher.

\subsection{Irreducible polynomials over finite fields}
In this subsection $\K$ is a finite field; in other words, we consider $\K=\F_{p^k}$ where $p$ is an odd prime and $k\geq1$ is an integer.
We say that $p\neq2$ because we want $\cchar(\K)$ different from two.

It is known from Galois theory \cite{Cox} that the algebraic extensions of a finite field are uniquely determined from the degree and are always separable. Moreover, it follows immediately from this fact that  Galois groups of finite extensions of $\F_{p^k}$ are cyclic. 

If we take an irreducible polynomial $f(x)=x^6+ax^3+b$ in $\F_{p^k}[x]$ then it has to be $\Gal{\F}{\F_{p^k}}\cong C_6$, where $\F$ is the splitting field of $f(x)$.
Since the results obtained in the previous subsections hold for finite fields too, provided that the characteristic is not two, we can prove a criterion that allows us to say when a polynomial $f(x)$ as before is irreducible.
\begin{proof}[Proof of Theorem \ref{teo:finite}]
 Let us consider Case 1. If $f(x)$ is irreducible then the Galois group of the splitting field $\F$ is isomorphic to $C_6$. The claim follows from Proposition \ref{prop:C_6} because $\zeta_3$ is not in the field for our assumption on $p^k$.
 
 Suppose that $R(x)$ is irreducible. Then the splitting field of $R(x)$ over $\F_{p^k}$ has degree three. This means that, since $\cchar(\K)\neq2$, the discriminant of $R(x)$ is a square in $\F_{p^k}$; in other words, we have that $-3\Delta$ is a square. 
 
 Now, we use the hypothesis that $p^k\not\equiv1\:\text{mod }3$ and $p$ is not three, which tells us that $\zeta_3\notin\F_{p^k}$ and then $-3$ and $\Delta$ are both not squares in $\F_{p^k}$. This implies that $\F$ contains an extension of $\F_{p^k}$ of degree two and one of degree three; hence, one has $[\F:\F_{p^k}]=6$.
 
 We conclude that $f(x)$ is irreducible because the only factorizations of $f(x)$ in the algebraic closure of $\F_{p^k}$ are: products of two degree three polynomials or products of three degree two polynomials.
 In these cases one has $[\F,\F_{p^k}]=3,2$. We cannot have $\alpha\in\F_{p^k}$ because otherwise $2\alpha^3+b=\sqrt\Delta$ also belongs to $\F_{p^k}$; this is impossible for what we said before.
 
 We now prove Case 2. If $f(x)$ is irreducible then the claim follows in the same way as in Case 1 from Proposition \ref{prop:zeta_in} because 
 $p^k\equiv1\:\text{mod }3$ implies $\zeta_3\in\F_{p^k}$.
 Suppose that $\sqrt[3]b\:,\sqrt\Delta\notin\F_{p^k}$; then we have that $[\F:\F_{p^k}]=6$ and we conclude as in Case 1. 
 
 Finally, if $p=3$ then we have that the map $x\rightarrow x^3$ is the Frobenius automorphism of $\F_{3^k}$. This means that any element in the field is a cube and 
 $f(x)=\left(x^2+\sqrt[3]a\:x+\sqrt[3]b\right)^3$ is reducible over $\F_{p^k}$. 
\end{proof}
We note that $p^2$ is always equal to $1\:\text{mod }3$ when $p\neq3$ and then Case 1 can only happen if $k$ is odd. 

\section{Subfields}
\label{section:subfields}
In this section we use Galois correspondence to give a complete list of all the intermediate extensions of $\K$ inside the splitting field $\F$ of $f(x)=x^6+ax^3+b$. We consider all the possible cases that we studied before in the paper. Moreover, we assume $\cchar(\K)\neq2,3$.

\subsection{\texorpdfstring{$\zeta_3\notin\K$}{Zeta3 not in K} and \texorpdfstring{$\Delta\neq-3n^2$}{Delta not -3n2}}
\paragraph{\texorpdfstring{$\sqrt[3]b\notin\K$}{Sqrt3 b not in K} and \texorpdfstring{$R(x)$}{R(x)} reducible over \texorpdfstring{$\K$}{K}} We explicitly described the Galois group in Subsection \ref{subsection:not}. We have that $\Gal{\F}{\K}\cong D_6$ and there are $14$ proper subfields, as shown in Figure \ref{Subgroups_D6}.
\begin{itemize}
    \item Degree 2: \[\K(\sqrt\Delta),\:\K(\zeta_3),\:\K(\sqrt{-3\Delta})\hspace{2cm}\text{ normal}\]
          we recall that $\K(\zeta_3)=\K(i\sqrt3)$;
    \item Degree 3: \[\K(\sqrt[3]b),\:\K(\sqrt[3]b\:\zeta_3),\:\K(\sqrt[3]b\:\zeta_3^2)\]
          since when we say $\sqrt[3]b\notin\K$ we mean exactly that $x^3-b$ is irreducible;
    \item Degree 4: \[\K(\sqrt\Delta,\zeta_3)\hspace{2cm}\text{ normal}\:;\]
    \item Degree 6: \[\K(\zeta_3,\sqrt[3]b)\hspace{2cm}\text{ normal}\]
          this is the splitting field of $x^3-b$ over $\K$,
          \[\K(\alpha),\:\K(\alpha\zeta_3),\:\K(\alpha\zeta_3^2)\]
          \[\K(\alpha\zeta_3+\beta\zeta_3^2),\:\K(\alpha+\beta\zeta_3),\:\K(\alpha+\beta\zeta_3^2)\]
          each of these extensions is fixed by exactly one symmetry of $D_6$, see Figure \ref{D_6}.
\end{itemize}
\paragraph{$\sqrt[3]b\in\K$ and $R(x)$ irreducible over $\K$} The Galois group is isomorphic to $D_6$ like in the previous case. 
\begin{figure}[H]
\begin{center}
 \renewcommand{\arraystretch}{2}
  \begin{tabular}{ | c || c | c | }
    \hline
    $1$ & $1$ & $1$ \\ \hline
    $2$ & $7$ & $1$ \\ \hline
    $3$ & $1$ & $1$ \\ \hline 
    $4$ & $3$ & $0$ \\ \hline
    $6$ & $3$ & $3$ \\ \hline
    $12$ & $1$ & $1$ \\
    \hline
  \end{tabular}
 \renewcommand{\arraystretch}{1}
\end{center}
\caption{This table shows the number of subgroups of $D_6$ (central column) of a given order (left column). The right column indicates the number of normal subgroups.}
\label{Subgroups_D6}
\end{figure}
Then we again have $14$ subfields, see Figure \ref{Subgroups_D6}.
\begin{itemize}
    \item Degree 2: \[\K(\sqrt\Delta),\:\K(\zeta_3),\:\K(\sqrt{-3\Delta})\hspace{2cm}\text{ normal}\]
          in this case $\K(\sqrt{-3\Delta})$ is the quadratic extension that corresponds to $C_6\triangleleft D_6$;
    \item Degree 3: \[\K(\alpha+\beta),\:\K(\alpha\zeta_3+\beta\zeta_3^2),\:\K(\alpha\zeta_3^2+\beta\zeta_3
          )\] 
          since $R(x)$ is irreducible and its discriminant is not a square in $\K$, the cubic subfields are generated by its roots;
    \item Degree 4: \[\K(\sqrt\Delta,\zeta_3)\hspace{2cm}\text{ normal}\:;\]
    \item Degree 6: \[\K(\sqrt{-3\Delta},\alpha+\beta)\hspace{2cm}\text{ normal}\]
          this is the splitting field of $R(x)$ over $\K$,
          \[\K(\alpha),\:\K(\alpha\zeta_3),\:\K(\alpha\zeta_3^2)\]
          these extensions have degree six because they are generated by roots of $f(x)$, which is irreducible, and are distinct because $\zeta_3\notin\K$. Furthermore, they all contain $\sqrt\Delta$ and then they are the three symmetries of the $S_3$ subgroup of $D_6$ represented by $\K(\sqrt\Delta)$,
          \[\K(\zeta_3,\alpha+\beta),\:\K(\zeta_3,\alpha\zeta_3+\beta\zeta_3^2),\:\K(\zeta_3,\alpha\zeta_3^2+\beta\zeta_3)\] 
          corresponding to the three symmetries of the $S_3$ subgroup of $D_6$ represented by $\K(\zeta_3)$;
          such subgroup also corresponds to $\Gal{\F}{\K(\zeta_3)}$, which is the splitting field of $R(x)$ over $\K(\zeta_3)$.     
\end{itemize}
\paragraph{$\sqrt[3]b\notin\K$ and $R(x)$ irreducible over $\K$} The Galois group is isomorphic to $\mathcal S_3\times\mathcal S_3$. There are $58$ subfields as shown in Figure \ref{Subgroups_S3S3}.
\begin{figure}[H]
\begin{center}
\begin{subfigure}[b]{0.3\textwidth}
 \renewcommand{\arraystretch}{2}
  \begin{tabular}{ | c || c | c | }
    \hline
    $1$ & $1$ & $1$ \\ \hline
    $2$ & $15$ & $0$ \\ \hline
    $3$ & $4$ & $2$ \\ \hline 
    $4$ & $9$ & $0$ \\ \hline
    $6$ & $20$ & $2$ \\ \hline
    $9$ & $1$ & $1$ \\ \hline
    $12$ & $6$ & $0$ \\ \hline
    $18$ & $3$ & $3$ \\ \hline
    $36$ & $1$ & $1$ \\
    \hline
  \end{tabular}
 \renewcommand{\arraystretch}{1}
 \end{subfigure}
 \begin{subfigure}[c]{0.3\textwidth}
 \begin{tikzpicture}[node distance=2cm, auto]
               \node (A) {$\F$};
		       \node (B) [below of=A,xshift=2cm]   
		                 {$\K\left(\sqrt{-3\Delta},\sqrt[3]b(\alpha+\beta)\right)$};
               \node (C) [below of=A,xshift=-2cm] {$\K(\zeta_3,\sqrt[3]b)$};
               \node (D) [below of=C,xshift=2cm]          {$\K$};
               \draw[-] (C) to node  {} (A);
               \draw[-] (B) to node  {} (A);
               \draw[-] (D) to node  {$6$} (C);
               \draw[-] (D) to node  [swap]{$6$} (B);
 \end{tikzpicture}  
 \end{subfigure}
\end{center}
\caption{Table of the subgroups of $\mathcal S_3\times\mathcal S_3$. The diagram on the right shows the two          normal extensions of $\K$ of degree six which generate $\F$.}
\label{Subgroups_S3S3}
\end{figure}
A basis for the splitting field $\F$ is  
\[\mathcal E=\left\{\alpha^i\cdot\zeta_3^j\cdot\sqrt[3]{b^k}\:\big|\:i=0,...,5;\:j=1,2;\:k=0,1,2\right\}\] and then 
$\Gal{\F}{\K}$ is completely determined by:
\[\begin{aligned}
&F(\alpha)=\alpha\zeta_3^i\:\hspace{5.1cm}i=0,1,2;\\
&F(\alpha)=\beta\zeta_3^i\:\hspace{5.1cm}i=3,4,5;\\
&F(\zeta_3)=\zeta_3^j\:\:\:\:\:\hspace{5cm}j=1,2\:;\\
&F(\sqrt[3]b)=\sqrt[3]b\:\zeta_3^k\hspace{4.5cm}k=0,1,2\:.
\end{aligned}\]
It is possible to distinguish each pair of the following subfields, say $\K_1$ and $\K_2$ for example, by showing that one of these automorphisms fixes $\K_1$, but not $\K_2$. 
\begin{itemize}
    \item Degree 2: \[\K(\zeta_3),\:\K(\sqrt{-3\Delta}),\:\K(\sqrt\Delta)\hspace{2cm}\text{ normal}\]
          there are three subgroups of order $18$ in $\mathcal S_3\times\mathcal S_3$: two are isomorphic to $C_3\times\mathcal S_3$, while the third one to $C_3\rtimes\mathcal S_3$. The latter subgroup corresponds to $\K(\sqrt\Delta)$;
    \item Degree 3: \[\K(\sqrt[3]b),\:\K(\sqrt[3]b\:\zeta_3),\:\K(\sqrt[3]b\:\zeta_3^2)\:,\]    
          \[\K\left(\sqrt[3]b(\alpha+\beta)\right),\:\K\left(\sqrt[3]b(\alpha\zeta_3+\beta\zeta_3^2)\right),\:\K\left(\sqrt[3]b(\alpha\zeta_3^2+\beta\zeta_3)\right)\:;\]
    \item Degree 4: \[\K(\sqrt\Delta,\zeta_3)\hspace{2cm}\text{ normal}\:;\]
    \item Degree 6: \[\K(\zeta_3,\sqrt[3]b),\:\K\left(\sqrt{-3\Delta},\sqrt[3]b(\alpha+\beta)\right)\hspace       {2cm}\text{ normal}\]
          see Figure \ref{Subgroups_S3S3},
          \[\K\left(\zeta_3,\sqrt[3]b(\alpha+\beta)\right),\:\K\left(\zeta_3,\sqrt[3]b(\alpha\zeta_3+\beta\zeta_3^2)\right),\:\K\left(\zeta_3,\sqrt[3]b(\alpha\zeta_3^2+\beta\zeta_3)\right)\]
          these three extensions are generated by $\K(\zeta_3)$ and the three cubic subfields of $\K\left(\sqrt{-3\Delta},\sqrt[3]b(\alpha+\beta)\right)$,
          \[\K(\sqrt{-3\Delta},\sqrt[3]b),\:\K(\sqrt{-3\Delta},\sqrt[3]b\:\zeta_3),\:\K(\sqrt{-3\Delta},\sqrt[3]b\:\zeta_3^2)\]
          these other three are instead generated by $\K(\sqrt{-3\Delta})$ and the three cubic subfields of $\K(\zeta_3,\sqrt[3]b)$,
          \[\K(\sqrt\Delta,\sqrt[3]b),\:\K(\sqrt\Delta,\sqrt[3]b\:\zeta_3),\:\K(\sqrt\Delta,\sqrt[3]b\:\zeta_3^2)\]
          they are generated by $\K(\sqrt{\Delta})$ and the three cubic subfields of $\K(\zeta_3,\sqrt[3]b)$,
          \[\K\left(\sqrt\Delta,\sqrt[3]b(\alpha+\beta)\right),\:\K\left(\sqrt\Delta,\sqrt[3]b(\alpha\zeta_3+\beta\zeta_3^2)\right),\:\K\left(\sqrt\Delta,\sqrt[3]b(\alpha\zeta_3^2+\beta\zeta_3)\right)\]
          as before these extensions are generated by $\K(\sqrt\Delta)$ and the three cubic subfields of $\K\left(\sqrt{-3\Delta},\sqrt[3]b(\alpha+\beta)\right)$,
          \[\K(\alpha),\:\K(\alpha\zeta_3),\:\K(\alpha\zeta_3^2),\:\K(\beta),\:\K(\beta\zeta_3),\:\K(\beta\zeta_3^2)\]
          these subfields have the property of not containing any cubic extension of $\K$, because they correspond to the six subgroups of $\mathcal S_3\times\mathcal S_3$ of order six which are only contained by the $C_3\rtimes\mathcal S_3$ subgroup;
    \item Degree 9: \[\K(\alpha+\beta),\:\K(\alpha\zeta_3+\beta\zeta_3^2),\:\K(\alpha\zeta_3^2+\beta\zeta_3       ),\:\K((\alpha+\beta)\zeta_3^2),\:\K(\alpha+\beta\zeta_3),\:\K(\alpha\zeta_3+\beta)\:,\]
          \[\K((\alpha+\beta)\zeta_3),\:\K(\alpha\zeta_3^2+\beta),\:\K(\alpha+\beta\zeta_3^2)\]
          each of the $9$ subgroups of $\mathcal S_3\times\mathcal S_3$ of order four is only contained in two subgroups of order $12$, which are maximal. Hence, the extensions of degree $9$ are generated by the elements obtained by taking the quotient of the generators of every cubic subfield of $\F$;
    \item Degree 12: \[\K\left(\sqrt\Delta,\zeta_3,\sqrt[3]b\right),\:\K\left(\sqrt\Delta,\zeta_3,\sqrt[3]b       (\alpha+\beta)\right)\hspace{2cm}\text{ normal}\]
          these are the splitting fields of $(x^3-b)(x^2-\Delta)$ and $R(x)(x^2-\Delta)$,
          \[\K(\zeta_3,\alpha),\:\K(\zeta_3,\beta)\]
          these extensions are clearly of degree $12$ and they are distinct because $\beta=\frac{\sqrt[3]b}{\alpha}\notin\K(\zeta_3,\alpha)$; moreover, they are not normal because they do not contain all the roots of $f(x)$;
    \item Degree 18: \[\K(\alpha,\beta),\:\K(\alpha,\beta\zeta_3),\:\K(\alpha,\beta\zeta_3^2),\:\K(\alpha
          \zeta_3,\beta),\:\K(\alpha\zeta_3,\beta\zeta_3),\:\K(\alpha\zeta_3,\beta\zeta_3^2),\]
          \[\K(\alpha\zeta_3^2,\beta),\:\K(\alpha\zeta_3^2,\beta\zeta_3),\:\K(\alpha\zeta_3^2,\beta\zeta_3^2),\:\K(\zeta_3,\alpha+\beta),\:\K(\zeta_3,\alpha+\beta\zeta_3),\:\K(\zeta_3,\alpha+\beta\zeta_3^2),\]
          \[\K\big((\zeta_3-1)(\alpha-\beta)\big),\:\K\big(\zeta_3(\zeta_3-1)(\alpha-\beta)\big),\:\K\big(\zeta_3^2(\zeta_3-1)(\alpha-\beta)\big)\]
          the easiest way to show that these extensions are all different is to check that each one is fixed exactly by one element of order two of $\mathcal S_3\times\mathcal S_3$.
\end{itemize}

\subsection{\texorpdfstring{$\zeta_3\notin\K$}{Zeta3 not in K} and \texorpdfstring{$\Delta=-3n^2$}{Delta=-3n2}}
\paragraph{\texorpdfstring{$\sqrt[3]b\notin\K$}{Sqrt3(b) not in K} and \texorpdfstring{$R(x)$}{R(x)} reducible over \texorpdfstring{$\K$}{K}} The Galois group is isomorphic to $\mathcal S_3$ and we immediately see that the proper subfields are the following four extensions of $\K$.
\begin{itemize}
    \item Degree 2: \[\K(i\sqrt3)\hspace{2cm}\text{ normal}\:;\]
    \item Degree 3: \[\K(\sqrt[3]b),\:\K(\sqrt[3]b\:\zeta_3),\:\K(\sqrt[3]b\:\zeta_3^2)\:;\]
\end{itemize}
\paragraph{$\sqrt[3]b\in\K$ and $R(x)$ irreducible over $\K$} We have that $\Gal{\F}{\K}\cong C_6$ and then there are only two subfields of $\F$.
\begin{itemize}
    \item Degree 2: \[\K(i\sqrt3)\hspace{2cm}\text{ normal}\:;\]
    \item Degree 3: \[\K(\alpha+\beta)\hspace{2cm}\text{ normal}\:;\]
\end{itemize}
\paragraph{$\sqrt[3]b\notin\K$ and $R(x)$ irreducible over $\K$} In this case the Galois group is isomorphic to $C_3\times\mathcal S_3$ and there are $12$ subfields, see Figure \ref{Subgroups_C3S3}.
\begin{itemize}
    \item Degree 2: \[\K(i\sqrt3)\hspace{2cm}\text{ normal}\:;\]
    \item Degree 3: \[\K\left(\sqrt[3]b(\alpha+\beta)\right)\hspace{2cm}\text{ normal}\]
          this is the splitting field of $R(x)$, whose discriminant is now a square in $\K$,
          \[\K(\sqrt[3]b),\:\K(\sqrt[3]b\:\zeta_3),\:\K(\sqrt[3]b\:\zeta_3^2)\:;\]
    \item Degree 6: \[\K(\zeta_3,\sqrt[3]b),\:\K\left(\zeta_3,\sqrt[3]b(\alpha+\beta)\right)\hspace{2cm}
          \text{ normal}\]
          these extensions are the splitting fields of $x^3-b$ and $R(x)(x^2+x+1)$ respectively,
          \[\K(\alpha),\:\K(\beta)\:;\]
          \begin{figure}[H]
\begin{center}
\begin{subfigure}[b]{0.3\textwidth}
 \renewcommand{\arraystretch}{2}
  \begin{tabular}{ | c || c | c | }
    \hline
    $1$ & $1$ & $1$ \\ \hline
    $2$ & $3$ & $0$ \\ \hline
    $3$ & $4$ & $2$ \\ \hline 
    $6$ & $4$ & $1$ \\ \hline
    $9$ & $1$ & $1$ \\ \hline
    $18$ & $1$ & $1$ \\ 
    \hline
  \end{tabular}
 \renewcommand{\arraystretch}{1}
 \end{subfigure}
 \begin{subfigure}[c]{0.3\textwidth}
 \begin{tikzpicture}[node distance=2cm, auto]
               \node (A) {$\F$};
		       \node (B) [below of=A,xshift=2cm]   
		                 {$\K\left(\sqrt[3]b(\alpha+\beta)\right)$};
               \node (C) [below of=A,xshift=-2cm] {$\K(\zeta_3,\sqrt[3]b)$};
               \node (D) [below of=C,xshift=2cm]          {$\K$};
               \draw[-] (C) to node  {} (A);
               \draw[-] (B) to node  {} (A);
               \draw[-] (D) to node  {$6$} (C);
               \draw[-] (D) to node  [swap]{$3$} (B);
 \end{tikzpicture}  
 \end{subfigure}
\end{center}
\caption{Table of the subgroups of $C_3\times\mathcal S_3$. The diagram on the right shows the decomposition of $\F$ into two normal extension of $\K$, one of degree three and the other of degree six.}
\label{Subgroups_C3S3}
\end{figure}
    \item Degree 9: \[\K(\alpha+\beta),\:\K((\alpha+\beta)\zeta_3),\:\K((\alpha+\beta)\zeta_3^2)\]
          because each of three subgroup of $C_3\times\mathcal S_3$ of order two is the intersection of one non-normal order $6$ subgroup with the normal order $6$ subgroup. 
\end{itemize}

\subsection{\texorpdfstring{$\zeta_3\in\K$}{Zeta3 in K}}
\paragraph{\texorpdfstring{$\sqrt[3]b\notin\K$}{Sqrt3(b) not in K} and \texorpdfstring{$R(x)$}{R(x)} reducible over \texorpdfstring{$\K$}{K}} The Galois is isomorphic to $C_6$ and we have two subfields.
\begin{itemize}
    \item Degree 2: \[\K(\sqrt\Delta)\hspace{2cm}\text{ normal}\:;\]
    \item Degree 3: \[\K(\sqrt[3]b)\hspace{2cm}\text{ normal}\:.\]
\end{itemize}
\paragraph{$\sqrt[3]b\in\K$ and $R(x)$ irreducible over $\K$} The Galois is isomorphic to $\mathcal S_3$ and we have four subfields.
\begin{itemize}
    \item Degree 2: \[\K(\sqrt\Delta)\hspace{2cm}\text{ normal}\:;\]
    \item Degree 3: \[\K(\alpha+\beta),\:\K(\alpha\zeta_3+\beta\zeta_3^2),\:\K(\alpha\zeta_3^2+\beta\zeta_3       )\]
          generated by the three roots of $R(x)$.
\end{itemize}
\paragraph{$\sqrt[3]b\notin\K$ and $R(x)$ irreducible over $\K$} We have that $\Gal{\F}{\K}\cong C_3\times\mathcal S_3$ and we have $12$ subfields, see Figures \ref{Subgroups_C3S3} and \ref{Diagram_C3S3}.
\begin{figure}[H]
\begin{center}
 \begin{tikzpicture}[node distance=2cm, auto]
               \node (A) {$\F$};
		       \node (B) [below of=A,xshift=2cm]   
		                 {$\K\left(\sqrt\Delta,\sqrt[3]b(\alpha+\beta)\right)$};
               \node (C) [below of=A,xshift=-2cm] {$\K(\sqrt[3]b)$};
               \node (D) [below of=C,xshift=2cm]          {$\K$};
               \draw[-] (C) to node  {} (A);
               \draw[-] (B) to node  {} (A);
               \draw[-] (D) to node  {$3$} (C);
               \draw[-] (D) to node  [swap]{$6$} (B);
 \end{tikzpicture}  
\end{center}
\caption{The diagram shows the decomposition of $\F$ into two normal extension of $\K$, one of degree three and the other of degree six.}
\label{Diagram_C3S3}
\end{figure}
\begin{itemize}
    \item Degree 2: \[\K(\sqrt\Delta)\hspace{2cm}\text{ normal}\:;\]
    \item Degree 3: \[\K(\sqrt[3]b)\hspace{2cm}\text{ normal}\]
          this is the splitting field of $x^3-b$,
          \[\K\left(\sqrt[3]b(\alpha+\beta)\right),\:\K\left(\sqrt[3]b(\alpha\zeta_3+\beta\zeta_3^2)\right),\:\K\left(\sqrt[3]b(\alpha\zeta_3^2+\beta\zeta_3)\right)\]
          the three roots of $R(x)$;
    \item Degree 6: \[\K\left(\sqrt\Delta,\sqrt[3]b(\alpha+\beta)\right),\:\K\big(\sqrt\Delta,\sqrt[3]b\big       )\hspace{2cm}\text{ normal}\]
          these extensions are the splitting fields of $R(x)$ and $(x^3-b)(x^2-\Delta)$ respectively,
          \[\K(\alpha),\:\K(\beta)\:;\]
    \item Degree 9: \[\K\left(\alpha+\beta\right),\:\K\left(\alpha\zeta_3+\beta\zeta_3^2\right),\:\K\left(
          \alpha\zeta_3^2+\beta\zeta_3\right)\]
          again each of three subgroup of $C_3\times\mathcal S_3$ of order two is the intersection of one non-normal order $6$ subgroup with the normal order $6$ subgroup.
\end{itemize}


\begin{thebibliography}{9}
 \bibitem{Transitive} G. Butler and J. McKay, \emph{The transitive groups of degree up to eleven},
         Comm. Algebra, \textbf{11} (1983), no. 8, pp. 863--911. 
 \bibitem{1} H. Cohen, \emph{A course in computational algebraic number theory},
 Graduate Texts in Mathematics, vol. 138, Springer-Verlag, Berlin, 1993.         
 \bibitem{Cox} D. Cox, \emph{Galois theory},
         John Wiley \& Sons Inc., Hoboken, NJ, 2012, pp. xxviii+570. 
 \bibitem{2} A. S. Elsenhans, \emph{Improved methods for the construction of relative invariants for permutation groups},
 J. Symbolic Comput., \textbf{79} (2017), no. 2, pp. 211--231.
 \bibitem{3} C. Fieker and J. Kl\"uners, \emph{Computation of Galois groups of rational polynomials},
  J. Comput. Math., \textbf{17} (2014), no. 1, pp. 141--158.
 \bibitem{H-J} J. Harrington and L. Jones, \emph{The irreducibility of power compositional sextic                  polynomials and their Galois groups}, 
         Math. Scand., \textbf{120} (2017), no. 2, pp. 181--194.
 \bibitem{4} A. Hulpke, \emph{Techniques for the computation of Galois groups},
  Algorithmic algebra and number theory, Springer, Berlin, 1999, pp. 65--77.
 \bibitem{5} L. Soicher and J. McKay, \emph{Computing Galois groups over the rationals}, 
  J. Number Theory, \textbf{20} (1985), no. 3, pp. 273--281.
 \bibitem{6} R. Stauduhar, \emph{The determination of Galois groups},
  Math. Comp., \textbf{27} (1973), pp. 981--996.
 \bibitem{7} N. Sutherland, \emph{Computing Galois groups of polynomials (especially over function fields of prime characteristic)},
  J. Symbolic Comput., \textbf{71} (2015), pp. 73--97.
\end{thebibliography}
\end{document}